\theoremstyle{definition} \newtheorem{defn}{Definition}
\theoremstyle{definition} \newtheorem{thm}{Theorem}
\theoremstyle{definition} \newtheorem{lemma}{Lemma}
\theoremstyle{definition} \newtheorem*{remark}{Remark}
\theoremstyle{plain} \newtheorem{example}{Example}
\theoremstyle{definition} 
\theoremstyle{definition} 
\begin{document}

\author{D.Fletcher}
\title{Construction of fixed points of certain substitution systems by interlacing arrays in $1$ and $2$ dimensions}
\maketitle

\begin{abstract}

This paper describes an alternative method of generating fixed points of certain substitution systems. This method centres on taking infinite words consisting of one repeated letter per word. These infinite words are then interlaced to form a new, more complex, infinite word. By considering particular limits of interlacings of words, fixed points of substitutions are generated. This method is then extended to two dimensions, where a structure equivalent to a well known aperiodic tiling (the Robinson tiling) is constructed.

\end{abstract}

\section*{Introduction}

In the first section of this paper we describe our fundamental operation, the $\cap$ operation. This operation acts on two infinite words (or $1$D arrays) to produce a new infinite word, by placing a letter of the second word between every two letters of the first word. We then show how this operation can be linked to Toeplitz sequences, and show how iterating the operation can create more complex words. In the second section of this paper we introduce a generalisation of the $\cap$ operation, namely $\cap^{n}$. This operation places a letter from the second word after the $(kn)$ th letter of the first word (for all $k \in \mathbb{Z}$). We then consider limits of these operations, and prove that carefully selected limits not only exist, but are fixed points of specific substitutions. Hence we have an alternate method of calculating fixed points of (a family of) substitutions on words.
The results in the first two sections link in to known Toeplitz sequence theory. The third section extends these results to two dimensions, detailing how this method can generate a Robinson tiling (one of the first \lq aperiodic' tilings, as described in \cite{ROBIN}). The paper ends with a brief summary of possible uses and avenues for further investigation.

This paper attempts to err on the side of clarity, instead of brevity. As such we have included several worked examples were we walk the reader through new concepts slowly.
Of course the reader is free to skip worked examples.

On a first reading we suggest that the reader briefly skims subsections $1.1$ and $1.2$ and the start of section $2$, then skips ahead to subsection $2.4$ in order to understand what the method can do in the one dimensional case. After this, section $3$ can be read to see how the method extends to two dimensions, or the skipped subsections can be read for concrete proofs.

\section{The $\cap$ operation}

The $\cap$ operation uses the concept of an \emph{$\mathbb{Z}^{d}$-array}.

We will use these arrays to describe both the $1$ and $2$ dimensional cases, using similar terminology for both dimensions. In this section we will introduce the ($1$ dimensional) $\cap$ operation, leaving the $2$ dimensional case to the third section.  
Let us consider the following definition of $\mathbb{Z}^{d}$-arrays. This formulation of $\mathbb{Z}^{d}$-arrays was first studied in \cite{DOWNAROWICZ}, and extended in \cite{CORTEZ}.


\begin{defn}[$\mathbb{Z}^{d}$-array] \label{ZDarray}
A \emph{$\mathbb{Z}^{d}$-array} (with alphabet $\Sigma$) is a function

$Z :  \mathbb{Z}^{d} \mapsto \Sigma$
\end{defn}

The precise formulation of $\mathbb{Z}^{d}$-arrays we are using is new to this thesis, but is equivalent to the version found in \cite{CORTEZ}.




\begin{defn} \label{underlyinglattice}
Choose a set of basis vectors $\{ v_{1}, \ldots , v_{d}\}$ of $\mathbb{R}^{d}$.
Define the \emph{underlying lattice} $Latt$ as $\{a_{1}v_{1} + a_{2}v_{2} + \ldots + a_{d}v_{d} | a_{1}, a_{2}, \ldots \in \mathbb{Z} \}$.
Define $p : \mathbb{Z}^{d} \mapsto Latt$ as $p(a_{1}, a_{2}, \ldots , a_{d}) = a_{1}v_{1} + a_{2}v_{2} + \ldots + a_{d}v_{d}$. This is clearly a bijective function.

Then the \emph{array on a lattice} $(Z,Latt)$ is the function;

$Z' : Latt \mapsto \Sigma$ such that $Z' \circ p  = Z$.

In other words, $Z$ is the pullback of $Z'$. Note that if you have an array $Z$ and an underlying lattice $Latt$, you can calculate $Z'$. Similarly, if you have $Z'$ you can calculate $Z$. Thus we will often refer to \lq arrays on a lattice' as \lq arrays', when the lattice is fixed.

\end{defn}

These definitions correspond to assigning an element from an alphabet $\Sigma$ to every point on a lattice embedded in $\mathbb{R}^{d}$. The underlying lattice $Latt$ encodes which lattices we choose and the $\mathbb{Z}^{d}$-array encodes what elements from the alphabet get assigned to the particular lattice points.
From now on we will use the convention that when no underlying lattice is mentioned, we are using the integer points in $\mathbb{R}^{d}$ for our underlying lattice. In one dimension this corresponds to bi-infinite sequences.

By considering the Voronoi diagram of a lattice, we can construct an associated tiling for any lattice. A tile in this case would be the Voronoi cell of a point $p$ in a lattice $A \subset \mathbb{R}^{d}$  (namely the set of all points $x \in \mathbb{R}^{d}$ such that $d(x,p) = d(x,A)$). Thus we can consider the $\cap$ operation as an operation on tilings as well.

\subsection{$1$ dimension}

The major advantage of studying the $1$D case is that we can express the array as a bi-infinite \emph{sequence}, enabling us to use results applicable to sequences.
Thus we will define the $\cap$ operation for $1$D arrays, and \emph{Toeplitz sequences}, a class of examples originating in the field of dynamical systems \cite{JACOBKEANE} \cite{Kurka}.

\begin{defn}[$A \cap B$, the Superposition operation] \label{Superpositionoperation}

Let $A$ and $B$ be $\mathbb{Z}$-arrays, namely $A: \mathbb{Z} \mapsto \Sigma$, $B: \mathbb{Z} \mapsto \Sigma$.

Then define $A \cap B$ as follows;

\begin{equation*}
(A \cap B) (v) =
\begin{cases}
B(n) & \text{if } v=2n,\\
A(n) & \text{if } v=2n-1.
\end{cases}
\end{equation*}

\end{defn}

\begin{defn}
Define $f_{X}(A)$ as the function sending an array $A$ to $A \cap X$, where $X$ is another array.
\end{defn}

\subsection{Motivation}

We can consider $A\cap B$ in one dimension as the overlaying, or merger of two separate tilings.
Imagine the tilings $A$ and $B$ as two infinite translucent physical strips with the strip representing each tiling.
The operation $A \cap B$ corresponds with placing $A$ down over the origin normally, then shifting it half a unit to the left. You then place the strip for $B$ down over the origin normally, effectively interlacing points from $A$ and $B$.
Looking through $A$ and $B$'s strips, the tiling $A \cap B$ can be read off. Note that the distance between consecutive points needs to be scaled back up to $1$ (by expanding about the origin by a factor of $2$).
See picture \ref{layers1D} for a schematic motivation of how to create $A \cap B$ from two tilings $A$ and $B$.

\begin{figure}[!hbtp]
\includegraphics[angle=90, width=0.8\textwidth]{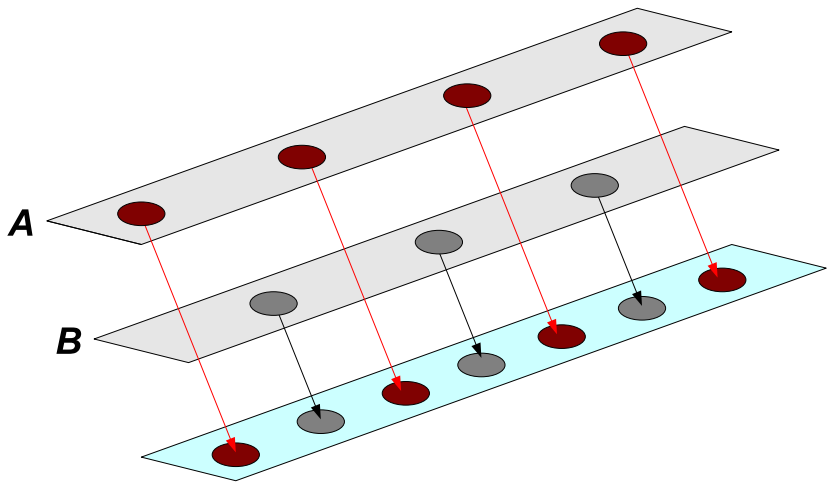}
\caption{A schematic motivation for $A \cap B$.}
\label{layers1D}
\end{figure}

We will now give the definition of a Toeplitz sequence.
We will also define a \lq null' array, a concept used in the field of Toeplitz sequences.
Note that there are many equivalent definitions of a Toeplitz sequence. We will use one more easily applicable to this thesis, from the reference \cite{JACOBKEANE} (with a very similar format to  \cite{IWANIKLACROIX}).

\begin{defn}
Let $A$ be a finite set of at least two elements.
Let $A^{\ast}$ be the set of finite sequences, or words over $A$.
If $w \in A^{\ast}$, let $|w|$ denote its length.
Let $\Omega = A^{\mathbb{Z}}$.
If $S \in \Omega$, $n \in \mathbb{Z}$ and $p \geq 1$, then let

\[S_{n} S_{n+1} \ldots S_{n+p-1}
\]
denote the word of length $p$ appearing in $S$ starting at position $n$.
Thus $S_{n}$ is the $n$th letter in the sequence.

\end{defn}

\begin{defn}
An element $S \in \Omega$ is called a \emph{periodic} sequence  with period $p \in \mathbb{N}$ if $S_{t} = S_{t+p}$ for all $t \in \mathbb{Z}$.
\end{defn}

\begin{defn}
An element $S \in \Omega$ is called a \emph{Toeplitz sequence} if it is not a periodic sequence, and satisfies the following condition;

\[\forall n \in \mathbb{Z}, \exists p \geq 2 \text{ such that } \forall k \in \mathbb{Z}, S_{n+kp} = S_{n}  \]
\end{defn}

\begin{defn}\label{periodicpart}
A \emph{p-periodic part} of a bi-infinite sequence $S \in A^{\mathbb{Z}}$ is;

$Per_{p}(S) = \{n \in \mathbb{Z} : \forall k \in \mathbb{Z}, S_{n+pk} = S_{n} \}$
\end{defn}




\begin{remark} \label{periodicparttoeplitz}
Consider a sequence $T$, which is not periodic.
If every point $T_{k} \in T$ is in a $p_{t}$-periodic part for some $p_{t}$, $T$ is a Toeplitz sequence.
This is because if a point $T_{n}$ belongs to a $p_{t}$-periodic part, then by definition of periodic part, $T_{n+kp}=T_{n}$.
\end{remark}

\begin{defn}
An \emph{almost Toeplitz sequence} is a sequence where all but a finite number of points are in a periodic part.
\end{defn}

In this chapter we will be creating (almost) Toeplitz sequences. The points not in a periodic part will be a finite word directly to the right of the origin. We will refer to this word as the \emph{seed}, for reasons which will be apparent later.

\begin{defn}
A \emph{null} array $\ast$ is the unique array with alphabet $\{ \ast \}$.
\end{defn}



Regarding the $\cap$ operation, let us define it to be an operation that is conventionally evaluated, as follows;

\begin{defn}
$A \cap B \cap C := ((A \cap B) \cap C)$
Furthermore;
$A_{1} \cap A_{2} \cap \ldots \cap A_{n+1} := (A_{1} \cap A_{2} \cap \ldots \cap A_{n}) \cap A_{n+1})$ for all $n \in \mathbb{N}$.
\end{defn}

\begin{defn}
$(A \cap B)^{2} = A \cap B \cap A \cap B$

$(f_{A} \circ f_{B})^{2}(X) = f_{A} \circ f_{B} \circ f_{A} \circ f_{B} (X) = X \cap B \cap A \cap B \cap A$
\end{defn}

\begin{defn}
If $B_{1}, B_{2}, B_{3}, \ldots$ is a sequence of arrays $\mathbb{Z} \mapsto \Sigma$, say they have the limit $B : \mathbb{Z} \mapsto \Sigma$ if $\forall n \in \mathbb{N}$ $\exists m$ such that $B(x) = B_{i}(x)$ for all $-n \leq x \leq n$ and $i \geq m$.
\end{defn}

For studying infinite $\cap$ operations, we need to be careful when looking at the limit.

Consider two arrays, $\underline{0}$ and $\underline{1}$, where $\underline{0}(v)=0$ and $\underline{1}(v) = 1$ for all $v \in \mathbb{Z}$.
Now consider the series $\{X_{i} \}_{i={1}}^{\infty} = \underline{0}, \underline{0} \cap \underline{1}, \underline{0} \cap \underline{1} \cap \underline{0}, \ldots$ as shown in figure \ref{array00cap1astcap1}.

\begin{figure}[!hbtp]
\includegraphics[angle=0, width=0.8\textwidth]{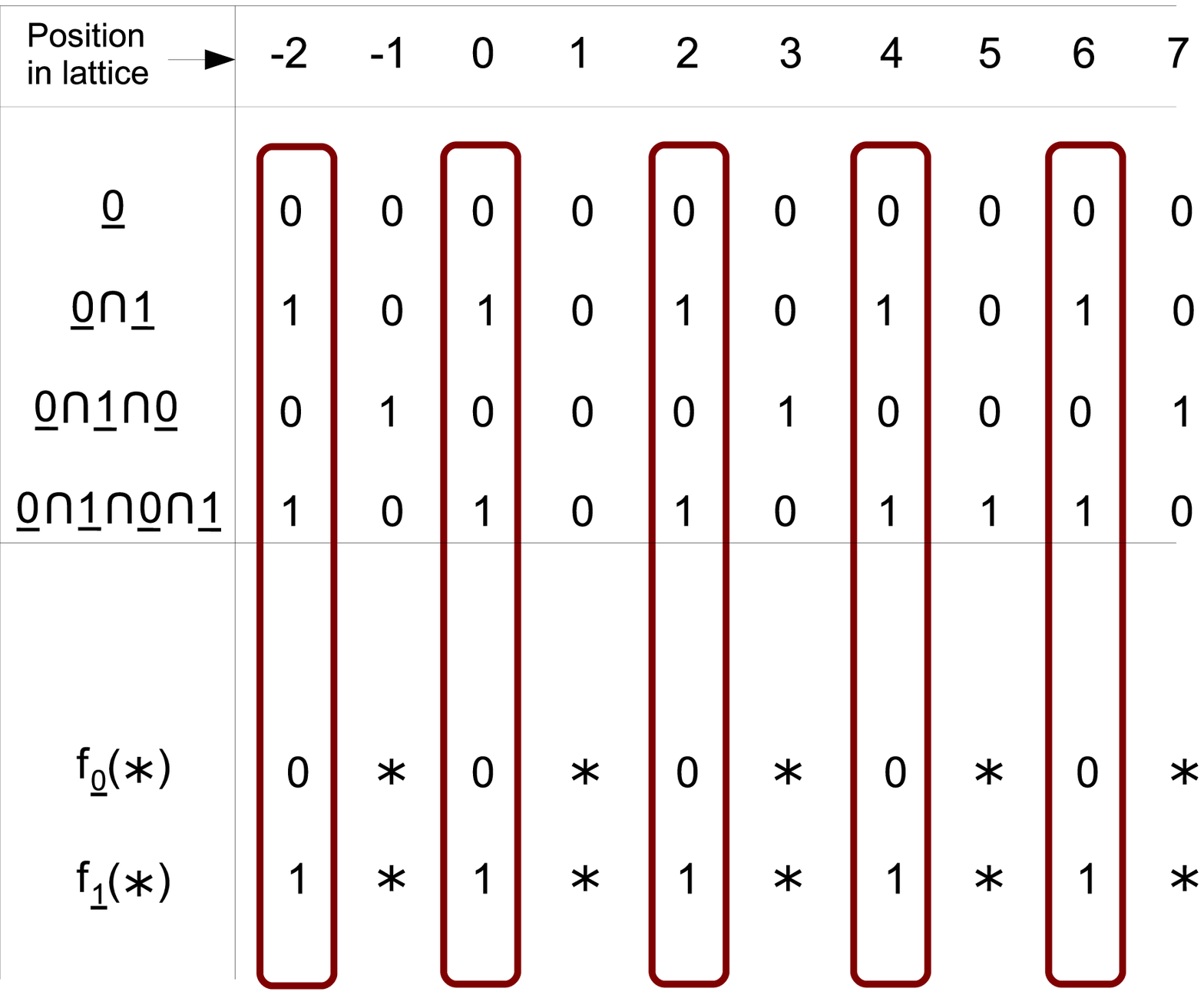}
\caption{The first few points of \(\underline{0}, \underline{0} \cap \underline{1}\) and other arrays.}
\label{array00cap1astcap1}
\end{figure}

By examining definition \ref{Superpositionoperation}, the reader can observe that points in the even positions of an array $A \cap B$ are only dependent on the array $B$. Thus the same result applies for a function $f_{B}(A)$, since it is merely different terminology.

We will use this fact to show that no limit of the series $\{X_{i} \}_{i={1}}^{\infty}$ exists.


The sequence $\{X_{i} \}_{i={1}}^{\infty} = \underline{0}, \underline{0} \cap \underline{1}, \underline{0} \cap \underline{1} \cap \underline{0}, \ldots$ can be rewritten as \[\underline{0}, f_{\underline{1}} (\underline{0}), f_{\underline{0}} (f_{\underline{1}} (\underline{0})), \ldots, f_{\underline{0}} (X_{2i+1}), f_{\underline{1}} (X_{2(i+1)}), f_{\underline{0}} (X_{2(i+1)+1)}) \ldots\]
 More precisely, $X_{2i}= f_{\underline{1}} (X_{2i-1})$, $X_{2i+1}= f_{\underline{0}} (X_{2i})$  and $X_{1}= \underline{0}$.

The value of even positions in $X_{2i}=f_{\underline{1}}(X_{2i-1})$ is $1$. The value of even positions in $X_{2i+1}$ is $0$. Thus the value of points in even positions will be different in $X_{2i}$ and $X_{2i+1}$, for any $i \in \mathbb{N}$. Thus there does not exist a limit of this sequence.




Limits of $\cap$ operations do not (usually) exist, but we can instead use the common idea of convergent subsequences, in an attempt to get interesting results.
For example with the limit of $\underline{0} \cap \underline{1} \cap \underline{0} \cap \underline{1} \cap \ldots$ we could consider the limit of

\[\underline{0} \cap \underline{1}, (\underline{0} \cap \underline{1})^{2}, (\underline{0} \cap \underline{1})^{3}, \ldots \] or
\[ \underline{0}, \underline{0} \cap \underline{1} \cap \underline{0}, (\underline{0} \cap \underline{1})^{2} \cap \underline{0}, \ldots \]

(These subsequences do have properly defined limits, which will be proven in a later section).
Considering different subsequences may give us different values for a limit. Note that the work in this section implies that only superpositions of the form $A_{1} \cap A_{2} \cap \ldots \cap A_{n} \cap A \cap A \cap A \ldots$ could have limits when considered in the naive way.

\section{$A \cap^{n} B$}

We will now define an operation without obvious equivalent in higher dimensions, $\cap^{n}$. We will then describe the limit of a sequence of $\cap^{n}$ superposition operations.

\begin{defn}
Let $A$ and $B$ be one dimensional sequences.
Define $A \cap^{n} B : \mathbb{Z} \mapsto \Sigma$ as follows;

\begin{equation*}
(A \cap^{n} B) (v) =
\begin{cases}
B(\frac{v}{n+1}) & \text{if } v=k(n+1) \text{ for } k \in \mathbb{Z} ,\\
A(v- \lfloor\frac{v}{n+1}\rfloor) & \text{otherwise}.
\end{cases}
\end{equation*}

\end{defn}

Intuitively you place $B_{0} \in B$ at the origin, bumping $A_{0}$ left one unit, and this bumping all points left of the origin one unit left.
You then add $B_{1}$ to the right of $A_{n}$, bumping tiles $A_{N}$ for $N > n$ right by $1$. Similarly you add $B_{-1}$ to the right of $A_{-n}$, bumping tiles away from the origin to make space.
You then place $B_{2}$ to the right of $A_{2n}$ and $B_{-2}$ to the right of $A_{-2n}$, and iterate for $B_{3}$ and $B_{-3}$, and so on.

\begin{figure}[!hbtp]
\includegraphics[angle=0, width=0.8\textwidth]{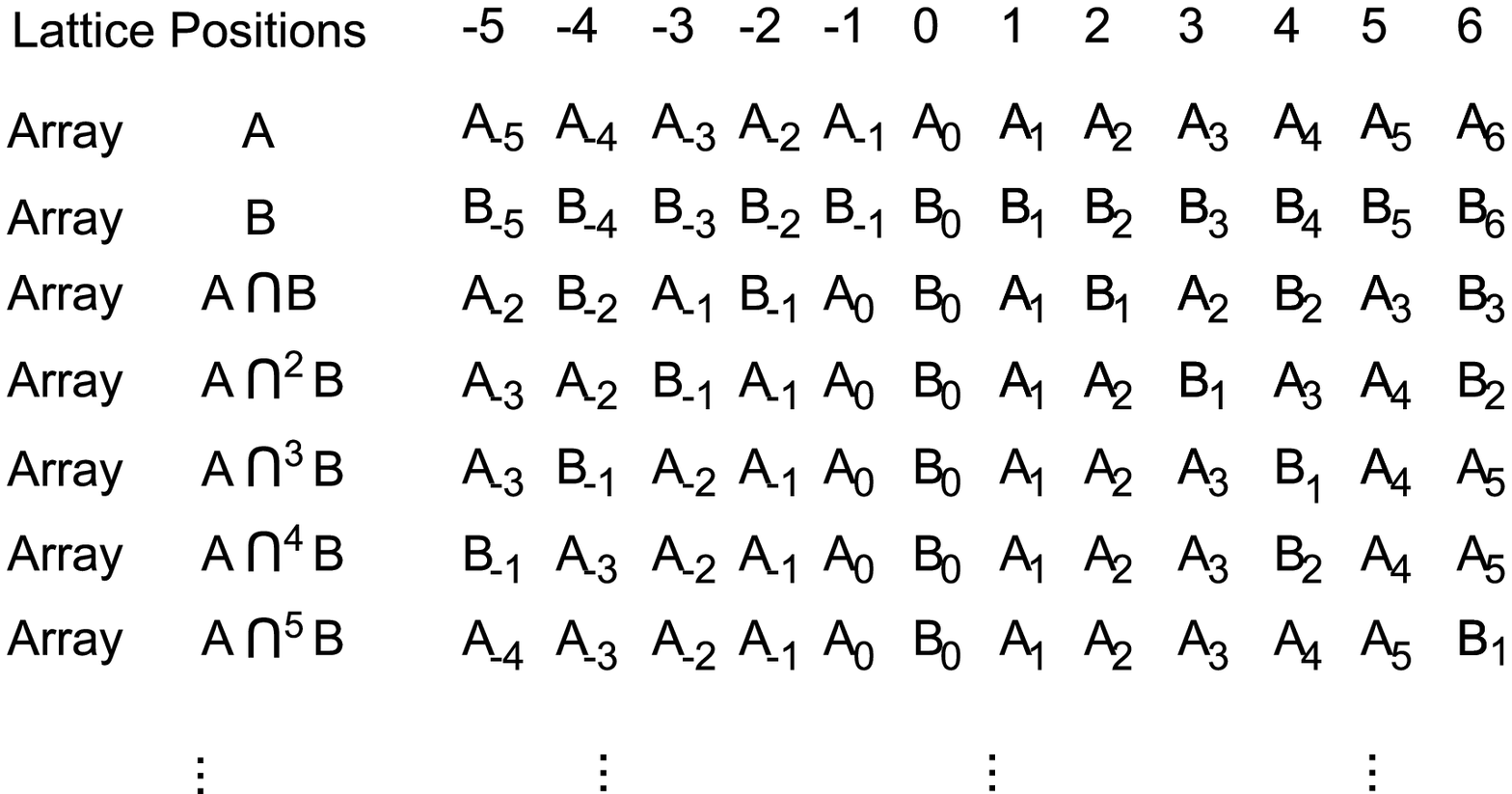}
\caption{The arrays $A$, $B$, and $A \cap^{i} B$ for $1 \leq i \leq 5$}
\label{capnexamples}
\end{figure}

\begin{defn}
Define $f^{n}_{X}(A)$ as the function sending an array $A$ to an array $A \cap^{n} X$.
\end{defn}

\begin{remark}
Note that the $\cap^{1}$ operation is equivalent to the $\cap$ operation defined previously.
\end{remark}

\subsection{$(\cap A \cap B)^{\infty}$}

We will now consider a subset of infinite compositions of the $\cap$ and $\cap^{n}$ operations. Note that the $\cap$ operation is equivalent to the operation $\cap^{1}$. Thus, in the $1$ dimensional case, $\cap^{n}$ can be considered to be a more general operation.
We will start with the basic definitions, before giving illuminating examples.

\begin{defn}[$(\cap^{s} A \cap^{t} B)^{m}$]
Consider two sequences $A$ and $B$. Let $s,t,m \in \mathbb{N}$. Then define $(\cap^{s} A \cap^{t} B)^{m}$ as follows;

\[(\cap^{s} A \cap^{t} B)^{m} := \underbrace{A \cap^{t} B \cap^{s} A \cap^{t} B \cap^{s} \ldots \ldots \cap^{s} A \cap^{t} B }_{2m}\]

Similarly (for sequences $A_{i}$, and integers $a_{i} \in \mathbb{N}$).

\[(\cap^{a_{1}} A_{1} \cap^{a_{2}} A_{2} \ldots \cap^{a_{n}} A_{n})^{m} := \underbrace{A_{1} \cap^{a_{2}} A_{2} \ldots \cap^{a_{n}} A_{n} \cap^{a_{1}} A_{1} \cap^{a_{2}} A_{2} \ldots \ldots \cap^{a_{n-1}} A_{n-1} \cap^{a_{n}} A_{n} }_{mn}\]
\end{defn}

\begin{defn}[$(\cap^{s} A \cap^{t} B)^{\infty}$]
Let $A$ and $B$ be sequences. Let $s,t \in \mathbb{N}$. Then define;

\[  (\cap^{s} A \cap^{t} B)^{\infty} := \lim_{m \rightarrow \infty} (\cap^{s} A \cap^{t} B)^{m} \] if the limit exists.

Similarly (for sequences $A_{i}$, and integers $a_{i} \in \mathbb{N}$),

\[  (\cap^{a_{1}} A_{1} \cap^{a_{2}} A_{2} \ldots \cap^{a_{n}} A_{n})^{\infty} := \lim_{m \rightarrow \infty} (\cap^{a_{1}} A_{1} \cap^{a_{2}} A_{2} \ldots \cap^{a_{n}} A_{n})^{m} \] if the limit exists.
\end{defn}

We will now work towards calculating the limit of repetitively applying a series of $\cap^{n} N$ operators, with different periodic arrays $N$.
We will eventually prove that you can create substitution sequences by this method.
To improve clarity, we will run through a simple example (with associated proofs) first, to give the reader some intuition into this area.

\subsection{ \( \underline{0} \cap \underline{1} \) and its limit \( (\underline{0} \cap \underline{1})^{\infty} \) }

Recall that $(\underline{0} \cap \underline{1})^{m}$ is defined as $\underbrace{\underline{0} \cap \underline{1} \cap \underline{0} \cap \underline{1} \ldots \cap \underline{0} \cap \underline{1}}_{2m}$.
Similarly, $(\underline{0} \cap \underline{1})^{\infty}$ is defined as the limit of $(\underline{0} \cap \underline{1})^{m}$ as $m \rightarrow \infty$.

We will show that $(\underline{0} \cap \underline{1})^{\infty}$ exists, and is equivalent to an (aperiodic primitive) substitution tiling.

Consider the sequence of arrays $ \ast, f_{\underline{1}}(\ast), f_{\underline{0}} \circ f_{\underline{1}} (\ast)$, $\ldots, (f_{\underline{0}} \circ f_{\underline{1}})^{m}(\ast), f_{\underline{1}} \circ ( f_{\underline{0}} \circ f_{\underline{1}})^{m}(\ast), \ldots$.
Part of this sequence is shown in figure \ref{tablepositionvsarraytypes}. Recall that $\ast$ is the null array.


\begin{figure}[!hbtp]
\includegraphics[angle=90, width=0.8\textwidth]{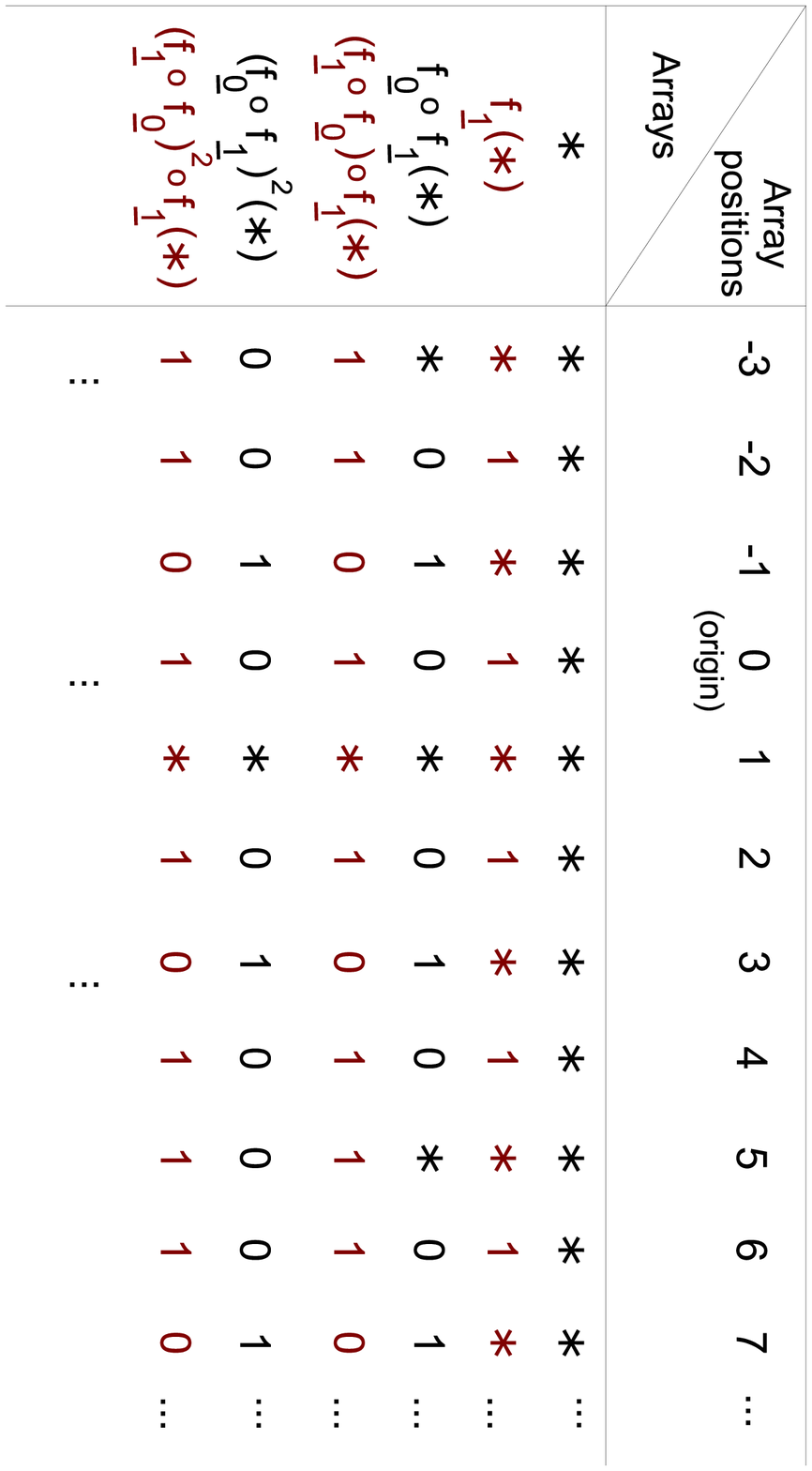}
\caption{Sequence of arrays.}
\label{tablepositionvsarraytypes}
\end{figure}

As we repetitively apply the $f_{\underline{0}} \circ f_{\underline{1}}$ operation to our array it appears that larger and larger patches of the array are independent of what the starting array was. We will now make this rigorous.

\begin{defn}
Consider an array $A_{1} \cap^{n_{2}} A_{2} \cap^{n_{3}} A_{3} \ldots \cap^{n_{m}} A_{m}$.
A point $p$ in the underlying lattice of $A_{1} \cap^{n_{2}} A_{2} \ldots \cap^{n_{m}} A_{m}$ is called  an \emph{undefined} point if $\ast \cap^{n_{2}} A_{2} \cap^{n_{3}} A_{3} \ldots \cap^{n_{m}} A_{m}$ maps  that point to $\ast$.
\end{defn}

Thus from definition \ref{Superpositionoperation}, any array of the form $X \cap \underline{1}$ (ie $f_{\underline{1}}(X)$), where $X$ is any array, has undefined points at odd positions.

\begin{lemma} \label{0cap1andffunction}
Let $m \in \mathbb{N}_{0}$ and $k \in \mathbb{N}$.
If a point is defined in $f_{\underline{1}} \circ (f_{\underline{0}} \circ f_{\underline{1}})^{m} (\ast)$, it will be defined, and take the same value, in $f_{\underline{1}} \circ (f_{\underline{0}} \circ f_{\underline{1}})^{m+k} (\ast)$.
\end{lemma}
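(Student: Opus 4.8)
The plan is to track a single defined point through the operation $f_{\underline{0}} \circ f_{\underline{1}}$ and show that, once defined, both its existence and its value are preserved under every further application. The key structural fact, already noted in the excerpt, is that in any array $X \cap \underline{1}$ the even positions carry the value $1$ (independently of $X$) and the odd positions carry the values of $X$; symmetrically for $X \cap \underline{0}$. So I would first observe that applying $f_{\underline{0}} \circ f_{\underline{1}}$ to an array $Y$ produces an array $Z = (Y \cap \underline{1}) \cap \underline{0}$ in which: even positions $v = 2n$ get value $0$; positions $v = 2n-1$ with $n$ even (i.e. $v \equiv -1 \pmod 4$, say) get value $1$; and positions $v = 2n-1$ with $n$ odd get value $Y(\tfrac{n+1}{2})$ for the appropriate index. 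In particular, the set of positions whose value in $Z$ depends on $Y$ is a sublattice of index $4$, and on the complementary positions $Z$ is already determined regardless of $Y$.

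The main step is an induction on $k$. For $k=0$ there is nothing to prove. For the inductive step, write $G = f_{\underline{0}} \circ f_{\underline{1}}$ and suppose the claim holds for $k$. A point $p$ that is defined in $f_{\underline{1}} \circ G^{m}(\ast)$ lies in a specific residue class modulo the relevant power of $2$; I would trace how $G$ maps position indices (it is, up to the rescaling built into $\cap$, multiplication of the index by $2$ together with the interlacing shifts described above) to see that the preimage relationship between positions in $f_{\underline{1}} \circ G^{m+1}(\ast)$ and $f_{\underline{1}} \circ G^{m}(\ast)$ is exactly the index-$4$ sublattice inclusion from the previous paragraph. Points of $f_{\underline{1}} \circ G^{m+1}(\ast)$ that sit \emph{off} that sublattice are defined purely by the outermost $\cap \underline{1}$ and $\cap \underline{0}$ (or by $\cap \underline{1}$ alone in the final $f_{\underline{1}}$) — hence defined and stable for all later stages automatically. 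Points that sit \emph{on} that sublattice inherit their value from a point of $f_{\underline{1}} \circ G^{m}(\ast)$, to which the inductive hypothesis applies, giving the same value in $f_{\underline{1}} \circ G^{m+k}(\ast)$, and so the same value in $f_{\underline{1}} \circ G^{m+k+1}(\ast)$. Composing these two cases closes the induction.

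The bookkeeping obstacle — and the step I expect to be most error-prone — is pinning down the exact index arithmetic of $\cap$: when $v = 2n$ we read $B(n)$ and when $v = 2n-1$ we read $A(n)$, so the "inner" array index of a position is a floor/ceiling of $v/2$, and composing two such operations (to get $G$) requires being careful about which parity of the inner index survives and how the $\ast$-valued (undefined) positions propagate. I would handle this by fixing notation for the index map of $\cap$ once and for all, proving the single-step "dependency sublattice" lemma cleanly, and then the induction is essentially formal. An alternative, perhaps cleaner, route is to prove directly by induction on $m$ that $f_{\underline{1}} \circ G^{m}(\ast)$ agrees with $f_{\underline{1}} \circ G^{m+1}(\ast)$ on every position defined in the former (the $k=1$ case), and then obtain general $k$ by iterating that one-step statement; this isolates all the index arithmetic into a single lemma and makes the $k$-induction trivial.
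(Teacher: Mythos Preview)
Your approach is workable, and the ``alternative route'' you sketch at the end --- prove the $k=1$ case by induction on $m$, then obtain general $k$ by iterating --- is the right way to organise it. The induction-on-$k$ argument in your middle paragraph is muddled as written: you begin with a point defined in $f_{\underline 1}\circ G^{m}(\ast)$ but then analyse positions in $f_{\underline 1}\circ G^{m+1}(\ast)$ and try to feed the hypothesis back in at level $m$; unwinding this, the step collapses to the $k=1$ statement anyway, so you should isolate that from the start. One small correction: the recurrence linking $Y_m := f_{\underline 1}\circ G^m(\ast)$ to $Y_{m+1}$ is $Y_{m+1} = (f_{\underline 1}\circ f_{\underline 0})(Y_m)$, not $G(Y_m)$, so the operator whose position arithmetic you need is $f_{\underline 1}\circ f_{\underline 0}$ rather than $f_{\underline 0}\circ f_{\underline 1}$. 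The analysis is symmetric and nothing substantive changes, but the residue classes shift.

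The paper proceeds differently and more explicitly. It proves by induction on $n$ that in any composition $f_{i_1}\circ\cdots\circ f_{i_n}(X)$ the positions $S_n=\{2^{n}s-2^{n-1}+1 : s\in\mathbb Z\}$ carry the value $i_n$, and then shows the sets $S_n$ are pairwise disjoint via a parity argument (reducing $2^{a}s-2^{a-1}+1 = 2^{b}t-2^{b-1}+1$ to $2s-1 = 2^{c+1}t-2^{c}$ with $c>0$). Since $f_{\underline 1}\circ G^{m}(\ast)$ defines exactly $\bigcup_{i=1}^{2m+1}S_i$ and $f_{\underline 1}\circ G^{m+k}(\ast)$ defines $\bigcup_{i=1}^{2(m+k)+1}S_i$, disjointness gives the lemma immediately. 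Your recursive ``dependency sublattice'' argument avoids computing these progressions explicitly and is arguably tidier for this lemma alone, but the paper's version yields extra information --- the explicit $S_n$'s reappear when identifying periodic parts and reading off the substitution rule --- so it earns its keep later.
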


\begin{proof}

Consider $f_{\underline{0}}(X)$, for any array $X$.
By the definition of the $\cap$ operation, we know that $f_{\underline{0}}(X)$ will have defined points at positions $2s$ ($s \in \mathbb{Z}$), with value $0$.
(Similarly, $f_{\underline{1}}(X)$ will have defined points with value $1$ in the same positions).

Consider $f_{\underline{1}} \circ f_{\underline{0}} (X)$ which is equivalent to $f_{\underline{1}}$ applied to $f_{\underline{0}}(X)$. By the definition of the $\cap$ operation, a point at position $4s-1$ in $f_{\underline{1}} \circ f_{\underline{0}} (X)$ will share the same value as a point at position $2s$ in $f_{\underline{0}}(X)$.
Thus $f_{\underline{1}} \circ f_{\underline{0}} (X)$ will have defined points at positions $4s-1$ with value $0$, and defined points at positions $2s$ with value $1$.

Let us generalise to an array $f_{i_{1}} \circ f_{i_{2}} \circ \ldots \circ f_{i_{n}} (X)$. This array will have defined points at positions $2^{n}s - 2^{n-1} +1$, with value $i_{n}$. We will use induction on $n$ to prove this.

Regarding the initial case, $f_{i_{1}}(X)$ has defined points at positions $2s$ as required (via definition $\ref{Superpositionoperation}$).
For the induction step, consider $f_{i_{2}} \circ f_{i_{3}} \circ \ldots \circ f_{i_{n+1}} (X)$. This will have defined points at positions $2^{n}s - 2^{n-1} +1$, with value $i_{n+1}$, from our assumption. Call this array $X_{n}$.
From definition \ref{Superpositionoperation}, $f_{i_{1}}  (X_{n})$ will have a point with value $X_{n}(v)$ at position $2v-1$.
Thus the set of points with value $i_{n+1}$ in $f_{i_{1}} (X_{n})$  are $\{x | x \in 2(2^{n}s -2^{n-1} +1) -1, s \in \mathbb{Z} \}$.
This can be rewritten as $S_{n+1}=\{x | x \in 2^{n+1}s -2^{n} +1, s \in \mathbb{Z} \}$, as required.
Thus our induction holds.

Thus $f_{\underline{1}} \circ (f_{\underline{0}} \circ f_{\underline{1}})^{m+k} (\ast)$ will define every point $f_{\underline{1}} \circ (f_{\underline{0}} \circ f_{\underline{1}})^{m} (\ast)$ defines.
This is because $f_{\underline{1}} \circ (f_{\underline{0}} \circ f_{\underline{1}})^{m} (\ast)$  will have defined points (with associated values) in $\bigcup_{i=1}^{i=2m+1} S_{i}$ and $f_{\underline{1}} \circ (f_{\underline{0}} \circ f_{\underline{1}})^{m+k} (\ast)$ will have defined points in $\bigcup_{i=1}^{i=2(m+k)+1} S_{i}$.
Furthermore, the defined points will have the same value, since the sets $S_{n} = \{2^{n}s - 2^{n-1} +1 | s \in \mathbb{Z}\}$ are disjoint for $n \geq 1$, thus the value of a point is solely determined by which set $S_{i}$ it is contained within.

For an explicit proof that the sets $S_{n}$ are disjoint, consider $S_{a}$ and $S_{b}$, for $a,b \in \mathbb{N}$, $a \neq b$. Assume WLOG that $a<b$. Then $b=a+c$, for $c>0$.
The sets $S_{a}$ and $S_{b}$ will have non-zero intersection if there exists an integer solution to the equation $2^{a}s - 2^{a-1} +1 = 2^{b}t - 2^{b-1} +1$.
This can be rewritten as $2^{a}s - 2^{a-1} +1 = 2^{a+c}t - 2^{a+c-1} +1$, and simplified to $2s - 1 = 2^{c+1}t - 2^{c}$. The left hand side of this equation can only take odd values, and the right hand side can only take even values (since $c>0$). Thus $S_{a}$ and $S_{b}$ are disjoint.

\end{proof}

\begin{thm} \label{limit0cap1exists}
$\lim_{m \rightarrow \infty} (\underline{0} \cap \underline{1})^{m}$ exists.
\end{thm}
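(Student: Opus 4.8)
My plan is to identify the limit array explicitly and then verify convergence on each window $[-n,n]$ separately. First I would rewrite the iterate as a composition: expanding the definition and using the left-associativity of $\cap$,
\[(\underline{0}\cap\underline{1})^{m} = f_{\underline{1}}\circ(f_{\underline{0}}\circ f_{\underline{1}})^{m-1}(\underline{0}),\]
so $(\underline{0}\cap\underline{1})^{m}$ is the result of applying the $2m-1$ alternating maps $f_{\underline{1}},f_{\underline{0}},f_{\underline{1}},\ldots,f_{\underline{1}}$ to the constant base array $\underline{0}$. Replacing the base $\underline{0}$ by $\ast$ gives exactly the array $f_{\underline{1}}\circ(f_{\underline{0}}\circ f_{\underline{1}})^{m-1}(\ast)$ treated in Lemma \ref{0cap1andffunction}, whose set of defined points, by the argument in that lemma's proof, is $\bigcup_{j=1}^{2m-1}S_{j}$, with the value at a point of $S_{j}$ equal to the constant value ($0$ or $1$) of the $j$-th inserted array, independent of the base. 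So the strategy is: (i) show every point of a fixed window is eventually defined, with one exceptional point; (ii) invoke Lemma \ref{0cap1andffunction} to get that the value at each defined point is eventually constant in $m$; (iii) pin down the exceptional point directly.

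For (i) I would note that $p\in S_{j}$ precisely when $p-1$ equals $2^{j-1}$ times an odd integer, so the $S_{j}$ are pairwise disjoint (as the lemma already records) and $\bigcup_{j\ge 1}S_{j}=\mathbb{Z}\setminus\{1\}$, the point $p=1$ (for which $p-1=0$) lying in no $S_{j}$. Hence for a window $[-n,n]$ there is a bound $J(n)$ such that every $p$ in it other than $1$ lies in some $S_{j}$ with $j\le J(n)$; so once $2m-1\ge J(n)$, all of $[-n,n]\setminus\{1\}$ consists of defined points of $(\underline{0}\cap\underline{1})^{m}$. For (ii): a point defined in $f_{\underline{1}}\circ(f_{\underline{0}}\circ f_{\underline{1}})^{m-1}(X)$ takes the same value for every base array $X$ (it is the value of the relevant inserted array), so the conclusion of Lemma \ref{0cap1andffunction} carries over from base $\ast$ to base $\underline{0}$: the value at such a point does not change as $m$ grows. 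Together, (i) and (ii) give that on $[-n,n]\setminus\{1\}$ the arrays $(\underline{0}\cap\underline{1})^{m}$ stabilise to some array $B$.

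For (iii), the position $1$ is undefined in every $(\underline{0}\cap\underline{1})^{m}$, but its value is forced: under each superposition map an odd position $v$ inherits the value at position $(v+1)/2$ of the previous array, and $(1+1)/2=1$, so tracing back through all $2m-1$ maps gives $(\underline{0}\cap\underline{1})^{m}(1)=\underline{0}(1)=0$ for every $m$. Setting $B(1):=0$, I get that $(\underline{0}\cap\underline{1})^{m}$ agrees with $B$ on the whole of $[-n,n]$ as soon as $2m-1\ge J(n)$. Since $n$ is arbitrary and the $B$'s on nested windows are consistent, this shows $\lim_{m\to\infty}(\underline{0}\cap\underline{1})^{m}=B$ exists.

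The main difficulty I anticipate is purely one of bookkeeping: Lemma \ref{0cap1andffunction} speaks about the null-array iterate $f_{\underline{1}}\circ(f_{\underline{0}}\circ f_{\underline{1}})^{m}(\ast)$, but the theorem is about the base-$\underline{0}$ iterate, and these two arrays differ exactly at the undefined points. Transferring the lemma therefore requires the observation that values at defined points are independent of the base, plus a separate argument for the never-defined point (here only $p=1$). A lesser point to watch is that the paper's definition of limit demands a single $m$ that works uniformly over the window $[-n,n]$; this is available because the largest power of $2$ dividing $p-1$ stays bounded on $[-n,n]\setminus\{1\}$, so the argument cannot be run purely position-by-position.
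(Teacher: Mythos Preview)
Your proposal is correct and follows essentially the same route as the paper: rewrite the iterate as $f_{\underline{1}}\circ(f_{\underline{0}}\circ f_{\underline{1}})^{m-1}(\underline{0})$, isolate the seed at position $1$ (which every $f_X$ fixes), invoke Lemma~\ref{0cap1andffunction} for the stability of defined points, and then argue that every other position is eventually defined. The only cosmetic difference is in this last step: the paper uses a ``the defined patch grows by at least one with each $f_{\underline i}$'' argument, whereas you read off the explicit sets $S_j$ from the lemma's proof and observe that $p\in S_j$ iff the dyadic valuation of $p-1$ is $j-1$, giving $\bigcup_{j\ge 1}S_j=\mathbb{Z}\setminus\{1\}$ and a uniform bound $J(n)$ on each window directly.
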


\begin{proof}
Note that another equivalent way of writing $\lim_{m \rightarrow \infty} (\underline{0} \cap \underline{1})^{m}$ is as $\lim_{m \rightarrow \infty} [ f_{\underline{1}} \circ (f_{\underline{0}} \circ f_{\underline{1}})^{m-1}(\underline{0})]$.

Consider the lattice point at position \lq 1' in any array. For any array $X$, the $f_{X}$ operation does not change what value is assigned to this point.
An array $(\underline{0} \cap \underline{1})^{m}$ is constructed by taking a $\underline{0}$ array, and applying multiple $f_{\underline{1}}$ and $f_{\underline{0}}$ operations to it in turn. Thus the lattice point at position \lq 1' in an array $(\underline{0} \cap \underline{1})^{m}$ will be a point of type $0$, for all $m$. Thus the lattice point at position \lq 1' is well-defined in the limit.

For our next step, we will show that in the limit, the only undefined point is at position \lq 1'. Call this point the \emph{seed}.
Consider $f_{\underline{1}}(\ast)$. The points at positions $2$ and $0$ are defined.
On applying $f_{\underline{0}}$ to this array, the point at position $2$ is shifted to position $3$, and the point at position $0$ is shifted to position $-1$. Thus in $f_{\underline{0}} \circ f_{\underline{1}} (\ast)$, the points two units away from the seed point are defined from $f_{\underline{1}}$, and the points one unit away from the seed are now defined by $f_{\underline{0}}$.
Every time a new $f_{i}$ function is applied to the array, the patch of defined points grows by (at least) one.
Thus in the limit, every point (barring the seed point) is defined for some $f_{\underline{1}} \circ (f_{\underline{0}} \circ f_{\underline{1}})^{m} (\underline{0})$.
From the previous lemma, we know that every defined point will keep the same value as $m \rightarrow \infty$. Thus these lattice points are well-defined in the limit.
Thus $\lim_{m \rightarrow \infty} (\underline{0} \cap \underline{1})^{m}$ exists.

\end{proof}

\begin{thm} \label{0cap1istoeplitz}
$( \underline{0} \cap \underline{1})^{\infty}$ is an almost Toeplitz sequence, with the only undefined point being at position \lq 1'.
\end{thm}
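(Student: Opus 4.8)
The plan is to extract everything from the internal structure already exhibited in the proof of Lemma~\ref{0cap1andffunction}. Write $T := (\underline{0} \cap \underline{1})^{\infty}$; this exists by Theorem~\ref{limit0cap1exists}, where it was produced as the (pointwise eventually constant) limit of $f_{\underline{1}} \circ (f_{\underline{0}} \circ f_{\underline{1}})^{m-1}(\underline{0})$, with position $1$ singled out as the seed. Recall from the proof of Lemma~\ref{0cap1andffunction} that for each $n \ge 1$ the array $f_{i_{1}}\circ \cdots \circ f_{i_{N}}(X)$ has constant value $i_{n}$ on the set $S_{n} = \{\, 2^{n}s - 2^{n-1} + 1 : s \in \mathbb{Z}\,\}$, and that the $S_{n}$ are pairwise disjoint. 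In the present situation the operators alternate with $f_{\underline{1}}$ outermost, so $T$ is constant on each $S_{n}$, and the value alternates with $n$ (being $1$ for $n$ odd and $0$ for $n$ even).

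First I would record the arithmetic observation that $S_{n}$ is exactly the residue class $\{\, x \in \mathbb{Z} : x \equiv 1 - 2^{n-1} \pmod{2^{n}}\,\}$; in particular $S_{n}$ is invariant under translation by $2^{n}$. Next I would check that the $S_{n}$ partition $\mathbb{Z}\setminus\{1\}$: by induction on $n$ one has $\mathbb{Z} = S_{1} \sqcup \cdots \sqcup S_{n} \sqcup \{\, x : x \equiv 1 \pmod{2^{n}}\,\}$, the intersection over all $n$ of the last set is $\{1\}$, and $x-1$ has finite $2$-adic valuation for every $x \ne 1$. (This also recovers, purely arithmetically, the part of Theorem~\ref{limit0cap1exists} saying every point other than position $1$ is eventually defined.) Hence each lattice point $x \ne 1$ lies in a unique $S_{n(x)}$.

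Now fix $x \ne 1$ and put $n = n(x)$. For every $k \in \mathbb{Z}$ we have $x + k2^{n} \in S_{n}$, so $T(x + k2^{n}) = T(x)$; that is, $x \in Per_{2^{n}}(T)$. Therefore every lattice point except position $1$ belongs to a periodic part, so $T$ is an almost Toeplitz sequence, and position $1$ — which by Theorem~\ref{limit0cap1exists} is the seed, its value being inherited unchanged from the initial array and hence in no periodic part — is its unique undefined point. Non-periodicity of $T$ then comes for free: if $T$ had period $p$, then $Per_{p}(T) = \mathbb{Z} \ni 1$, contradicting the previous sentence; alternatively, writing $p = 2^{a}q$ with $q$ odd, $T$ is constant on $S_{a+1}$ while $S_{a+1} + p$ equals the residue class $\{\, x : x \equiv 1 \pmod{2^{a+1}}\,\}$, which meets both $S_{a+2}$ and $S_{a+3}$ and so carries two different values.

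The work here is essentially bookkeeping, so there is no single hard step; the one place needing a moment's care is the claim that the $S_{n}$ leave no gap beyond the seed, i.e. that $\bigcup_{n\ge1} S_{n} = \mathbb{Z}\setminus\{1\}$ exactly — the short $2$-adic computation indicated above — after which the translation of the $S_{n}$-structure into the language of $Per_{p}$ is immediate.
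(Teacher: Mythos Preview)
Your argument is correct. You and the paper reach the same conclusion by genuinely different routes.

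The paper argues by induction along the construction sequence $X_{1}=f_{\underline{1}}(\underline{0})$, $X_{2}=f_{\underline{0}}(X_{1})$, \ldots: at each step, a $p$-periodic part of $X_{n}$ becomes a $2p$-periodic part of $X_{n+1}$, while the freshly inserted even positions form a new $2$-periodic part. Periodicity of every defined point is thus propagated through the iteration. You instead bypass the induction entirely by reading the answer off the explicit partition $\mathbb{Z}\setminus\{1\}=\bigsqcup_{n\ge 1} S_{n}$ already obtained in Lemma~\ref{0cap1andffunction}, together with the arithmetic observation that each $S_{n}$ is a full residue class modulo $2^{n}$; constancy of $T$ on $S_{n}$ then immediately gives $S_{n}\subset Per_{2^{n}}(T)$. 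Your approach is shorter and more transparently arithmetic for this specific sequence; the paper's inductive approach, on the other hand, is what generalises cleanly to the later Theorem~\ref{A1toAninftyisToeplitz}, where no explicit description of the analogue of the $S_{n}$ is available. One small remark: your aside that position~$1$ is ``hence in no periodic part'' because its value is inherited from the seed is not by itself a justification (inheritance and non-periodicity are different things); but this claim is not needed for the theorem as stated, and your alternative $2$-adic argument for non-periodicity is in any case complete.
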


\begin{proof}
We will use induction to show that every defined point in $( \underline{0} \cap \underline{1})^{\infty}$ is in some periodic part.
Construct the sequence $ \{X_{i}\}$ where $X_{1}=f_{\underline{1}}(\underline{0})$, $X_{2} = f_{\underline{0}}(X_{1})$, $X_{2i} = f_{\underline{0}}(X_{2i-1})$ and $X_{2i+1} = f_{\underline{1}}(X_{2i})$   for $i \geq 2$.

Consider the defined points of $X_{1}=f_{\underline{1}}(\underline{0})$. These points occur at even positions, and thus form a $2$-periodic part, as defined in definition \ref{periodicpart}. Thus every defined point in $X_{1}$ is in some periodic part. (In fact, every point is in a periodic part).

Assume every defined point in $X_{n}$ is in some periodic part. Consider $X_{n+1} = f_{\underline{i}}(X_{n})$ (where $f_{\underline{i}}$ is either $f_{\underline{0}}$ or $f_{\underline{1}}$, based on parity of $n$). (Denote the points of $X_{n}$ as $x'_{t}$, and the points of $X_{n+1}$ as $x_{t}$, for $t \in \mathbb{Z}$).

Any defined point in $f_{\underline{i}}(X_{n})$ will either correspond to a defined point in $X_{n}$, or will be an even point.
If it is an even point, it will be assigned a value of $1$ (if $n$ is even) or $0$ (if $n$ is odd).
Since all even points are assigned the same value, the even point of $f_{\underline{i}}(X_{n})$ form a $2$-periodic part.

Any defined point $x_{2t-1} \in f_{\underline{i}}(X_{n})$ will correspond to a point $x'_{t}$ in $X_{n}$.
By our assumption, the point $x'_{t}$ in $X_{n}$ belongs to some $p$-periodic part, for some $p$.
This periodic part consists of a series of points $x'_{t+kp}$ with the same label (for $k \in \mathbb{Z}$).
Therefore the $p$-periodic part will be mapped to a $2p$-periodic part in $f_{\underline{i}}(X_{n})$, consisting of a series of points $x_{2(t+kp)-1}$ with the same label.
Thus any defined point in $X_{n+1}$ belongs to some periodic part.
By induction, any defined point in $X_{i}$, for any $i$, must be in a periodic part.
We know from the last theorem that if a point is defined in $X_{i}$, it will have the same value in all sequences $X_{i+2k}$. Therefore once a point is defined (and thus belongs to a periodic part), it will always belong to a periodic part. We can therefore conclude that every defined point in the limit $( \underline{0} \cap \underline{1})^{\infty}$ is in a periodic part.





\end{proof}

\begin{remark}
Any finite composition of periodic arrays under the $\cap$ operation will be periodic. In particular, the sequence $(\underline{0} \cap \underline{1})^{m}$ is periodic, with period $2^{2m-1}$.

As a sketch proof, let $\{X_{k}\}_{k=0}^{\infty}$ be any arrays with only one letter in each alphabet, possibly different for each array. (A similar proof applies for periodic arrays, but requires more complex notation for little added clarity).
Consider some sequence $F_{1} = f_{X_{1}}(X_{0})$, $F_{i} = f_{X_{i}}(F_{i-1})$.
Consider a point in $X_{0}$. It will be in a $1$-periodic part.
Under the $f_{X_{1}}$ operation, this $1$-periodic part will be mapped to a $2$-periodic part. Any point in $F_{1}$ will either be undefined (and hence in the $2$-periodic part inherited from $X_{0}$), or it will be defined.
If it is defined, it will be in another $2$-periodic part, this one consisting of points from $X_{1}$ in even positions in the array.
Thus $F_{1}$ has period $2$.

Applying the $f_{X_{2}}$ operation to $F_{1}$ will send the $2$-periodic part representing undefined points in $F_{1}$ to a $4$-periodic part in $F_{2}$. The $2$-periodic part representing points first defined by $f_{X_{1}}$ will be sent to a $4$-periodic part, and a new $2$-periodic part will be created for the points from $X_{2}$ which have been placed in even positions in the array $F_{2}$.
Thus $F_{2}$ will have period $4$ (the \emph{lcm} of all the periodic parts).

Similarly $F_{3}$ will consist of two $8$-periodic parts, a $4$-periodic part and a new $2$-periodic part containing values from $X_{3}$. In general, $F_{i}$ will contain two $2^{i}$-periodic parts, and one $2^{j}$-periodic part for each $0 < j < i$. Thus $F_{i}$ will have period $2^{i}$.
$(\underline{0} \cap \underline{1})^{m}$ can be expressed as $F_{2m-1}$ for $X_{2k} = \underline{0}$, $X_{2k+1} = \underline{1}$.
Thus the result follows.


\end{remark}

\begin{thm}
The sequence $(\underline{0} \cap \underline{1})^{\infty}$ is a fixed point of the substitution,

\begin{align}
\sigma: & 0 \mapsto 0101 \nonumber \\
& 1 \mapsto 1101 \nonumber
\end{align}

More precisely $\sigma((\underline{0} \cap \underline{1})^{\infty}) =  (\underline{0} \cap \underline{1})^{\infty}$.

Furthermore $\lim_{m \rightarrow \infty} ((\underline{0} \cap \underline{1})^{m} \cap \underline{0})$ is also a fixed point of a substitution, namely;

\begin{align}
\widehat{\sigma}: & 0 \mapsto 0010 \nonumber \\
& 1 \mapsto 1010 \nonumber
\end{align}

More precisely $\widehat{\sigma} (\lim_{m \rightarrow \infty} ((\underline{0} \cap \underline{1})^{m} \cap \underline{0})) = \lim_{m \rightarrow \infty} ((\underline{0} \cap \underline{1})^{m} \cap \underline{0})$.
\end{thm}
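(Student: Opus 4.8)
The plan is to deduce both fixed-point statements from a single structural observation: with the right alignment of block boundaries, the substitution $\sigma$ \emph{is} the composite superposition operation $f_{\underline 1}\circ f_{\underline 0}$, and $\widehat\sigma$ is $f_{\underline 0}\circ f_{\underline 1}$; and then to note that $(\underline 0\cap\underline 1)^{\infty}$ is a fixed point of $f_{\underline 1}\circ f_{\underline 0}$ essentially by construction.

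First I would fix the convention under which a length-$4$ substitution acts on a $\mathbb Z$-array $S$: the word $\sigma(S_k)$ occupies the positions $4k-3,4k-2,4k-1,4k$, so that the marked origin of $\sigma(S)$ (between positions $0$ and $1$) matches that of $S$ and the blocks $[4k-3,4k]$ partition $\mathbb Z$. Then I would unwind Definition~\ref{Superpositionoperation} twice to locate the letters of $S\cap\underline 0\cap\underline 1=f_{\underline 1}(f_{\underline 0}(S))$: the letter $S_k$ travels to position $2(2k-1)-1=4k-3$, the $0$ inserted immediately after it by $\cap\underline 0$ travels to $2(2k)-1=4k-1$, and the two $1$'s inserted by the final $\cap\underline 1$ occupy $4k-2$ and $4k$. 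Hence on the block $[4k-3,4k]$ the array $S\cap\underline 0\cap\underline 1$ reads $S_k\,1\,0\,1$, which is exactly $\sigma(S_k)$ for both $S_k=0$ and $S_k=1$. This proves $\sigma(S)=f_{\underline 1}(f_{\underline 0}(S))$ for every $S$; the same bookkeeping with $\underline 0$ and $\underline 1$ interchanged gives block $S_k\,0\,1\,0=\widehat\sigma(S_k)$, i.e.\ $\widehat\sigma(S)=f_{\underline 0}(f_{\underline 1}(S))$. The one delicate point is the seed at position $1$: it is the leading letter of the block $\sigma(S_1)$, whose value is $S_1$ irrespective of the substitution rule, so the identity is consistent no matter what value the seed carries.

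Next I would establish the self-similarity $T:=(\underline 0\cap\underline 1)^{\infty}=f_{\underline 1}(f_{\underline 0}(T))$. By left-associativity of $\cap$ we have $(\underline 0\cap\underline 1)^{m+1}=(\underline 0\cap\underline 1)^{m}\cap\underline 0\cap\underline 1=f_{\underline 1}(f_{\underline 0}((\underline 0\cap\underline 1)^{m}))$; Theorem~\ref{limit0cap1exists} supplies the limit $T$, and since the value of $f_{\underline 1}(f_{\underline 0}(X))$ at a position $v$ depends on $X$ at most at the single position $\lceil v/4\rceil$, the operation $f_{\underline 1}\circ f_{\underline 0}$ is continuous with respect to this notion of limit of arrays, so letting $m\to\infty$ yields $T=f_{\underline 1}(f_{\underline 0}(T))$. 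Combining with the first step, $\sigma(T)=f_{\underline 1}(f_{\underline 0}(T))=T$. For the second substitution, the same continuity shows that $\widehat T:=\lim_{m\to\infty}\big((\underline 0\cap\underline 1)^{m}\cap\underline 0\big)=\lim_{m\to\infty}f_{\underline 0}((\underline 0\cap\underline 1)^{m})=f_{\underline 0}(T)$ exists, and applying $f_{\underline 0}$ to $T=f_{\underline 1}(f_{\underline 0}(T))$ gives $\widehat T=f_{\underline 0}(f_{\underline 1}(f_{\underline 0}(T)))=f_{\underline 0}(f_{\underline 1}(\widehat T))=\widehat\sigma(\widehat T)$.

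The main obstacle lies entirely in the first step: pinning down the alignment and getting the index arithmetic exactly right, so that the doubled superposition reproduces $\sigma$ letter-for-letter (seed included) rather than up to a shift. Once that identification is in hand, both fixed-point claims are immediate consequences of the self-similarity of $(\underline 0\cap\underline 1)^{\infty}$, which is visibly built into the recursive definition of $(\underline 0\cap\underline 1)^{m}$. As a fallback that sidesteps the continuity remark, one could instead extract from Lemma~\ref{0cap1andffunction} the closed form $T(v)=1$ when the $2$-adic valuation of $v-1$ is even and $T(v)=0$ when it is odd, and then verify $\sigma(T)=T$ directly by splitting into the residue classes of $v$ modulo $4$; this is elementary but more tedious.
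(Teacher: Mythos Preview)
Your proposal is correct and follows essentially the same route as the paper: both identify $\sigma$ with $f_{\underline{1}}\circ f_{\underline{0}}$ (and $\widehat\sigma$ with $f_{\underline{0}}\circ f_{\underline{1}}$) by computing where each letter of a generic sequence lands under two successive $\cap$'s, and then invoke the invariance of the limit under this composite. Your treatment of the block alignment and of the continuity of $f_{\underline{1}}\circ f_{\underline{0}}$ is a little more explicit than the paper's---which instead computes the period-$4$ pattern of $f_{\underline{0}}\circ f_{\underline{1}}(\ast)$ and appeals informally to ``increasing $m$ by one''---but the underlying argument is the same.
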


\begin{proof}

First consider $\lim ((\underline{0} \cap \underline{1})^{m} \cap \underline{0})$. Note that this limit exists, since it is equivalent to $\lim f_{\underline{0}}((\underline{0} \cap \underline{1})^{m})$, and $\lim ((\underline{0} \cap \underline{1})^{m})$ is well defined.

We know that $\lim_{m \rightarrow \infty} ((\underline{0} \cap \underline{1})^{m} \cap \underline{0})$ must be invariant under $f_{\underline{0}} \circ f_{\underline{1}}$, since $f_{\underline{0}} \circ f_{\underline{1}}$ is equivalent to increasing the value of $m$ by one.
From theorem  \ref{0cap1istoeplitz}, we know that the point at position \lq 1' of $\lim ((\underline{0} \cap \underline{1})^{m} \cap \underline{0})$ must be of value $0$, since neither $f_{\underline{0}}$ or $f_{\underline{1}}$ can alter the value of the point at position \lq 1 ', and it is of value \lq $0$' in the array $\underline{0}$ (the first sequence we are building the limit from). Denote the point at position \lq 1' the \emph{seed} point.

Examining the proof of theorem \ref{0cap1istoeplitz}, note that points defined by the first application of $f_{\underline{0}}$ must be in a $2$-periodic part, and points defined by the following application of $f_{\underline{1}}$ will be in a $(2 \times 2)$-periodic part.
Thus we can deduce that the highest periodic part in $f_{\underline{0}} \circ f_{\underline{1}} (\ast)$ is of period $4$, thus $f_{\underline{0}} \circ f_{\underline{1}} (\ast)$ is periodic, with period $4$. Furthermore the repeating period is the word $\ast$ $0$ $1$ $0$.

Now consider a generic sequence $t = \ldots t_{0} t_{1} t_{2} \ldots$, and what points are defined from it under the application of $f_{\underline{0}} \circ f_{\underline{1}}$.
The seed point, $t_{1}$ gets sent to the word $t_{1}$ $0$ $1$ $0$. Via the periodicity of $f_{\underline{0}} \circ f_{\underline{1}} (\ast)$ we know that the point $t_{2}$ is sent to the word $t_{2}$ $0$ $1$ $0$, which is joined on to $t_{1}$'s word by concatenation. Similarly, $t_{3}$'s word is concatenated to the end of $t_{2}$'s word, and so on for all $t_{i}$. This is of course the function $\sigma$ from the definition of the substitution.
Since the seed point is fixed for all $m \geq 1$, the points generated from it are fixed for all $m \geq 2$. Similarly, all points generated from those points are fixed for $m \geq 3$, and so on. Thus all points defined via the application of $f_{\underline{0}} \circ f_{\underline{1}}$ are fixed in the limit.

Since every point in the underlying lattice is defined by $f_{\underline{0}} \circ f_{\underline{1}}(t)$, we can conclude that $\lim ((\underline{0} \cap \underline{1})^{m} \cap \underline{0})$ can be generated via a substitution.
Since $f_{\underline{0}} \circ f_{\underline{1}} (\lim ((\underline{0} \cap \underline{1})^{m} \cap \underline{0})) = \lim ((\underline{0} \cap \underline{1})^{m} \cap \underline{0})$, it is a fixed point of that substitution.


For $( \underline{0} \cap \underline{1})^{\infty}$ use a similar proof, using the fact that $( \underline{0} \cap \underline{1})^{\infty}$ is invariant under $f_{\underline{1}} \circ f_{\underline{0}}$.
\end{proof}


We will now generalize these proofs to arrays of the form $(\cap^{a_{1}} A_{1} \cap^{a_{2}} A_{2} \cap^{a_{3}} \ldots \cap^{a_{n}} A_{n})^{\infty}$, where $A_{i}$ are periodic arrays.

We will also show these types of arrays are (almost) Toeplitz arrays.

\subsection{$(\cap^{a_{1}} A_{1} \cap^{a_{2}} A_{2} \cap^{a_{3}} \ldots \cap^{a_{n}} A_{n})^{\infty}$}

We can extend the proofs of lemma \ref{0cap1andffunction} and theorem \ref{limit0cap1exists} to cover the more general case, $(\cap^{a_{1}} A_{1} \cap^{a_{2}} A_{2} \cap^{a_{3}} \ldots \cap^{a_{n}} A_{n})^{\infty}$. To avoid excessive duplication, we will sketch how to alter the proof to the more general case.

\begin{thm} \label{a1anlimitexists}
$(\cap^{a_{1}} A_{1} \cap^{a_{2}} A_{2} \cap^{a_{3}} \ldots \cap^{a_{n}} A_{n})^{\infty}$ is well defined.
\end{thm}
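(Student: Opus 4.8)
The plan is to mimic the structure used for the special case $(\underline{0}\cap\underline{1})^{\infty}$, generalising Lemma \ref{0cap1andffunction} and Theorem \ref{limit0cap1exists} in turn. Write $g = f^{a_{n}}_{A_{n}}\circ\cdots\circ f^{a_{2}}_{A_{2}}\circ f^{a_{1}}_{A_{1}}$ for one full cycle of operators, so that $(\cap^{a_{1}}A_{1}\cap^{a_{2}}A_{2}\cdots\cap^{a_{n}}A_{n})^{m}$ is (up to the convention for which array seeds the composition) $g^{m}$ applied to a fixed starting array, and the claimed limit is $\lim_{m\to\infty} g^{m}(A_{1})$ read off through the appropriate intermediate stage. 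As in the simple case, the key is to identify, for each operator $f^{a_{i}}_{A_{i}}$, exactly which lattice positions become \emph{defined} (in the sense of the undefined-point definition) and with what values, and to show these positions are locked in once defined.

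First I would prove the analogue of Lemma \ref{0cap1andffunction}. The main computation is bookkeeping: track how the set of positions defined at stage $j$ transforms under one more application of $f^{a_{i}}_{A_{i}}$. For the $\cap^{n}$ operation, position $v$ is filled from $A_{i}$ precisely when $v$ is a multiple of $a_{i}+1$, and otherwise $v$ pulls its value from position $v - \lfloor v/(a_{i}+1)\rfloor$ of the previous array; so one composes these affine-type maps to get an explicit description of the positions and values contributed by the $k$-th operator in a long composition, just as the sets $S_{n}=\{2^{n}s-2^{n-1}+1\}$ arose before. I would then show these families of positions (one family per position in the cycle, indexed by how many full cycles have elapsed) are pairwise disjoint, so the value assigned to any position depends only on which family it falls in — hence is independent of $m$ once it appears. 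The disjointness argument replaces the clean ``odd versus even'' parity trick with a congruence/growth-rate comparison: the positions contributed at a later stage are sparser by a multiplicative factor of roughly $\prod(a_{i}+1)$, so for distinct stages the corresponding arithmetic-progression-like sets cannot coincide.

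Next, for Theorem \ref{a1anlimitexists} itself, I would argue as in Theorem \ref{limit0cap1exists}: (i) the finitely many positions that stay undefined forever form a bounded seed word to the right of the origin — each operator $f^{a_{i}}_{A_{i}}$ enlarges the patch of defined positions around that seed by a bounded amount, and since every finite position is eventually reached, only the seed survives as undefined; (ii) the values on the defined positions stabilise by the generalised lemma; therefore the limit array is well defined at every position and the limit exists. One should also note, as in the remark on periodicity, that each finite stage $g^{m}(A_{1})$ is genuinely periodic (a finite $\cap^{n}$-composition of periodic arrays is periodic, with period the product of the relevant $(a_{i}+1)$ factors), which guarantees the intermediate objects are honest $\mathbb{Z}$-arrays.

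The main obstacle I anticipate is purely combinatorial: getting a clean closed form for the positions and values contributed by the $k$-th operator in the composition $g^{m}$ when the moduli $a_{i}+1$ vary along the cycle. In the homogeneous case $a_{i}\equiv 1$ the relevant maps were all $v\mapsto 2v-1$ and composed trivially to powers of $2$; with mixed moduli the map $v\mapsto v-\lfloor v/(a_{i}+1)\rfloor$ is only piecewise affine, so iterating it requires care with the floor functions, and the disjointness/stabilisation bookkeeping must be done modulo the running product $\prod_{j}(a_{j}+1)$ rather than modulo a single power of $2$. Everything else — the seed being a finite word, the patch of defined points growing without bound, values locking once defined — transfers essentially verbatim from the proofs already given, so I would state those parts briefly and concentrate the write-up on the position-tracking lemma.
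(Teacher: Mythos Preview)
Your outline is correct and matches the paper's overall strategy: identify a finite seed, show the defined patch grows without bound under each full cycle, and show that once a position is defined it keeps its value. The paper presents this as a sketch proof with $\sigma=\min\{a_{1},\ldots,a_{n}\}$ determining the seed and the patch growing by at least one position per cycle.

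Where you diverge is in the stability step. You propose to generalise Lemma~\ref{0cap1andffunction} in full, computing the explicit position families contributed by each operator and proving them pairwise disjoint; you rightly flag the mixed-modulus floor functions as the main obstacle. The paper sidesteps this entirely. It observes that $(\cap^{a_{1}}A_{1}\cdots\cap^{a_{n}}A_{n})^{m}$ factors as $(f^{a_{n}}_{A_{n}}\circ\cdots\circ f^{a_{1}}_{A_{1}})^{k}$ applied to $(\cap^{a_{1}}A_{1}\cdots\cap^{a_{n}}A_{n})^{m-k}$, and then appeals directly to the \emph{definition} of a defined point (via the null array $\ast$): a position is defined at stage $k$ precisely when its value in $g^{k}(X)$ is independent of $X$, so it automatically agrees at every later stage $m>k$. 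No closed-form position sets, no disjointness argument, no floor-function bookkeeping is needed. Your route would work but is considerably more laborious; the obstacle you anticipate is real for your method and simply absent from the paper's.
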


\begin{proof}[Sketch proof]
Take $\sigma = \text{ min}\{a_{1}, \ldots , a_{N} \}$. Any point in an array $X$ with position strictly between $0$ and $\sigma$ is invariant under any operation $\cap^{a_{i}} A_{i}$.
Thus the first $(\sigma - 1)$ points of $A_{1}$ form a \lq seed' which is invariant, and thus the values of the seed points are well defined in the limit.

Ignoring the seed, the first undefined point in $\cap^{a_{1}} A_{1} \cap^{a_{2}} A_{2} \cap^{a_{3}} \ldots \cap^{a_{n}} A_{n}$ cannot be closer to the origin that position $(\sigma + 1)$, since the first $(\sigma - 1)$ points are in the seed, and a new value will be inserted at position $\sigma$ during every iteration of $\cap^{a_{1}} A_{1} \cap^{a_{2}} A_{2} \cap^{a_{3}} \ldots \cap^{a_{n}} A_{n}$, by the definition of $\sigma$.

Similarly, the first undefined point of $(\cap^{a_{1}} A_{1} \cap^{a_{2}} A_{2} \cap^{a_{3}} \ldots \cap^{a_{n}} A_{n})^{2}$ cannot be closer to the origin than $\sigma + 2$, and in general the first undefined point of $(\cap^{a_{1}} A_{1} \cap^{a_{2}} A_{2} \cap^{a_{3}} \ldots \cap^{a_{n}} A_{n})^{m}$ cannot be closer than $(\sigma + m)$.

$(\cap^{a_{1}} A_{1} \cap^{a_{2}} A_{2} \cap^{a_{3}} \ldots \cap^{a_{n}} A_{n})^{m}$ can be rewritten as;

\[(f^{a_{n}}_{A_{n}} \circ f^{a_{n-1}}_{A_{n-1}} \circ  \ldots \circ f^{a_{1}}_{A_{1}})^{k} ((\cap^{a_{1}} A_{1} \ldots \cap^{a_{n}} A_{n})^{m-k} \]
for $m > k$.

Thus any defined point in $(\cap^{a_{1}} A_{1} \cap^{a_{2}} A_{2} \cap^{a_{3}} \ldots \cap^{a_{n}} A_{n})^{k}$ will have the same value for any $(\cap^{a_{1}} A_{1} \cap^{a_{2}} A_{2} \cap^{a_{3}} \ldots \cap^{a_{n}} A_{n})^{m}$ for all $m > k$.
Thus $(\cap^{a_{1}} A_{1} \cap^{a_{2}} A_{2} \cap^{a_{3}} \ldots \cap^{a_{n}} A_{n})^{\infty}$ is well defined.

\end{proof}

\begin{thm} \label{A1toAninftyisToeplitz}
If $A_{i}$ are periodic arrays, then $(\cap^{a_{1}} A_{1} \cap^{a_{2}} A_{2} \cap^{a_{3}} \ldots \cap^{a_{n}} A_{n})^{\infty}$ is an (almost) Toeplitz array.
\end{thm}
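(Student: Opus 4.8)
The plan is to follow the proof of Theorem~\ref{0cap1istoeplitz} and the Remark after it, adapted to arbitrary periodic arrays $A_{i}$ and arbitrary exponents $a_{i}$. Write $Z := (\cap^{a_1} A_1 \cap^{a_2} A_2 \cap^{a_3} \ldots \cap^{a_n} A_n)^{\infty}$; by Theorem~\ref{a1anlimitexists} this array exists, and its undefined points together with the positions $1, 2, \ldots, \sigma - 1$, where $\sigma = \min\{a_1, \ldots, a_n\}$, form a finite set, the \emph{seed}. So it is enough to show that every position of $Z$ lying outside the seed belongs to some $p$-periodic part in the sense of Definition~\ref{periodicpart}; the almost-Toeplitz property then follows from the definition of an almost Toeplitz sequence (compare Remark~\ref{periodicparttoeplitz}), the finitely many seed positions serving as the permitted exceptions.

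The one genuinely new ingredient, compared with the $\underline{0},\underline{1}$ case, is a lemma on how the operation $\cap^{a}$ transports periodic parts when the right-hand array is periodic. Suppose $A$ has period $q$. In $X \cap^{a} A$ the inserted letters sit at the positions $k(a+1)$ and carry the values $A(k)$; since $A(k) = A(k+q)$, those inserted points whose index $k$ runs through a fixed residue class modulo $q$ all receive the same letter, so each inserted point lies in a $q(a+1)$-periodic part of $X \cap^{a} A$. For the carried-over points, the letter of $X$ at position $u$ is moved to position $u + \lfloor (u-1)/a \rfloor$ of $X \cap^{a} A$; hence if $u$ lies in a $p$-periodic part of $X$, then, after first replacing $p$ by $ap$ in case $a \nmid p$, the progression $\{\, u_{0} + kp : k \in \mathbb{Z} \,\}$ is carried to an arithmetic progression of common difference $p(a+1)/a$ along which the letters are copies of one another, so its image point lies in a $p(a+1)/a$-periodic part of $X \cap^{a} A$. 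Consequently, if every defined point of $X$ lies in a periodic part of $X$, the same holds for $X \cap^{a} A$. I would then build $(\cap^{a_1} A_1 \ldots \cap^{a_n} A_n)^{m}$ from $A_1$ by applying $f^{a_2}_{A_2}, f^{a_3}_{A_3}, \ldots, f^{a_n}_{A_n}, f^{a_1}_{A_1}, \ldots$ one operator at a time: the base array $A_1$ is periodic, so each of its points lies in a periodic part (the $q_{1}$-periodic part $\mathbb{Z}$, where $q_{1}$ is the period of $A_1$), and the lemma together with induction on the number of operators applied shows that every point of $(\cap^{a_1} A_1 \ldots \cap^{a_n} A_n)^{m}$ lies in a periodic part --- equivalently, every finite stage is periodic, matching the Remark after Theorem~\ref{0cap1istoeplitz}.

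What remains is to pass to the limit, and this is where I expect the argument to need real care. Fix a non-seed position $x$; by Theorem~\ref{a1anlimitexists} there is $m_{x}$ with $(\cap^{a_1} A_1 \ldots \cap^{a_n} A_n)^{m}(x) = Z(x)$ for all $m \geq m_{x}$. The thing to prove is that one can pin down a \emph{single} period $p$ and a stage $m_{0} \geq m_{x}$ so that $x$ lies in the $p$-periodic part $\{\, x + kp : k \in \mathbb{Z} \,\}$ of $(\cap^{a_1} A_1 \ldots \cap^{a_n} A_n)^{m}$ for every $m \geq m_{0}$; this should be true because, once the letters in a neighbourhood of $x$ have stabilised, each further application of $f^{a_n}_{A_n} \circ \cdots \circ f^{a_1}_{A_1}$ inserts new letters near $x$ only at scales coarser than $p$, so it lengthens the progression through $x$ rather than destroying it, and one would make this precise by tracking the progression $x + p\mathbb{Z}$ through the relabelling maps $u \mapsto u + \lfloor (u-1)/a_{i} \rfloor$. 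Granting this, for each $k$ the sequence $m \mapsto (\cap^{a_1} A_1 \ldots \cap^{a_n} A_n)^{m}(x + kp)$ is eventually constant and, for $m \geq m_{0}$, equals $(\cap^{a_1} A_1 \ldots \cap^{a_n} A_n)^{m}(x) = Z(x)$; hence $Z(x + kp) = Z(x)$ for all $k$, so $x$ lies in the $p$-periodic part of $Z$. Since only the finitely many seed positions can escape this, $Z$ is an almost Toeplitz array.

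The hard part will be exactly this last step: at each stage the coordinates are relabelled by the maps $u \mapsto u + \lfloor (u-1)/a_{i} \rfloor$, so a periodic part present at one stage does not literally survive to the next, and the real content is to show that for a point whose value has settled the relevant period can be frozen. An alternative would be to route the argument through the substitution structure, as is done in the $\underline{0},\underline{1}$ case, but stating that in this generality risks circularity with the results proved after this theorem, so I would keep the direct periodic-part bookkeeping.
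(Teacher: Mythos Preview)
Your overall strategy matches the paper's: prove a transport lemma (periodic parts are carried to periodic parts by $\cap^{a}$ when the inserted array is periodic), then induct along the finite approximants to conclude that every defined point at every finite stage lies in a periodic part consisting entirely of defined points. The paper phrases the transport in terms of $\text{lcm}(x,a_{j+1})$ rather than your relabelling map $u \mapsto u + \lfloor(u-1)/a\rfloor$, but these are equivalent computations.

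Where you diverge is the passage to the limit, which you flag as the hard step and propose to handle by freezing a single period valid across \emph{all} $m \geq m_{0}$. This is more than is needed, and your supporting intuition that further applications of $F = f^{a_n}_{A_n}\circ\cdots\circ f^{a_1}_{A_1}$ ``insert new letters near $x$ only at scales coarser than $p$'' is not correct: the outermost $f^{a_n}_{A_n}$ always inserts at spacing $a_n+1$, regardless of $m$. The clean argument --- which the paper spells out in the model case (final paragraph of the proof of Theorem~\ref{0cap1istoeplitz}) and leaves implicit here --- uses stability of defined values at a \emph{fixed position}. By Theorem~\ref{a1anlimitexists}, once position $v$ is defined at stage $m_{0}$, its value at every later stage, \emph{at that same position $v$}, equals its stage-$m_{0}$ value and hence equals $Z(v)$. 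Now the $\ast$-based stage-$m_{0}$ array is itself periodic with some period $P_{m_{0}}$ (this is what your induction, or the paper's, actually proves), so the whole progression $\{v + kP_{m_{0}} : k \in \mathbb{Z}\}$ consists of defined points all carrying the letter $Z(v)$ already at stage $m_{0}$; stability then gives $Z(v+kP_{m_{0}}) = Z(v)$ for every $k$. Thus a \emph{single} stage suffices and no bookkeeping of periods through the relabelling maps is required; your worry that ``a periodic part present at one stage does not literally survive to the next'' dissolves once you phrase stability in terms of positions rather than trace-backs.
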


\begin{proof}
Note that if $A_{i}$ is a periodic array, every point in $A_{i}$ is in a periodic part.
The maximum period of these periodic parts is equal to the period of the array $A_{i}$, denoted $p(A_{i})$.

We wish to show that every defined point in $(\cap^{a_{1}} A_{1} \cap^{a_{2}} A_{2} \cap^{a_{3}} \ldots \cap^{a_{n}} A_{n})^{m}$ is in a periodic part (ignoring the seed).
Construct a sequence $C= \{c(i)\}_{i=1}^{\infty}$ of arrays where the $s$th term in the sequence consists of the array $A_{1}$ with $s$ $f^{a_{i}}_{A_{i}}$ operations applied to it in ascending cyclic sequence.

The first few terms of this sequence are;

$c(1)= A_{1} \cap^{a_{2}} A_{2}$.

$c(2)= A_{1} \cap^{a_{2}} A_{2} \cap^{a_{3}} A_{3}$

$c(3)= A_{1} \cap^{a_{2}} A_{2} \cap^{a_{3}} A_{3} \cap^{a_{4}} A_{4}$

$\ldots$

$c(n-1)= A_{1} \cap^{a_{2}} \ldots \cap^{a_{n}} A_{n}$

$c(n)= A_{1} \cap^{a_{2}} \ldots \cap^{a_{n}} A_{n} \cap^{a_{1}} A_{1}$

$c(n+1)= A_{1} \cap^{a_{2}} \ldots \cap^{a_{n}} A_{n} \cap^{a_{1}} A_{1} \cap^{a_{2}} A_{2}$

$\ldots$

$\ldots$.

Explicitly $c(s)$ is;

\[f^{a_{j}}_{A_{j}} \circ \ldots \circ f^{a_{1}}_{A_{1}} \circ f^{a_{n}}_{A_{n}} \circ f^{a_{n-1}}_{A_{n-1}} \circ \ldots \circ f^{a_{1}}_{A_{1}} \circ \ldots \ldots \circ f^{a_{2}}_{A_{2}} (A_{1})\]
$\ldots$where $j$ is such that $s=(n-1)+(k-1)n+j$, and $j \leq n$.

The arrays $(\cap^{a_{1}} A_{1} \cap^{a_{2}} A_{2} \cap^{a_{3}} \ldots \cap^{a_{n}} A_{n})^{m}$ form a subsequence of this sequence.
Thus if we can show that every defined point in any array (from $C$) is in a periodic part, ignoring the seed, then we are done.
We will use induction, showing that this statement holds true for $c(1)$, and that if the statement is true for $c(s)$ it is true for $c(s+1)$.

For the initial step of our induction, consider $f^{a_{2}}_{A_{2}} (\ast)$, alternatively known as $\ast \cap^{a_{2}} A_{2}$.
The points defined by $f^{a_{2}}_{A_{2}}$ belong to a finite number of periodic parts, with maximum period $(a_{2}+1).p(A_{2})$. Thus every defined point in $f^{a_{2}}_{A_{2}} (A_{1})$ belongs to a periodic part.



For our induction step, assume every defined point in the array $c(s)$ is in a periodic part.
Without loss of generality, take any one of these periodic parts, $p$, of period $x$.
Consider $f^{a_{j+1}}_{A_{j+1}}$.
Consider a section of $c(s)$ of length $\text{lcm}(x,a_{j+1})$.
The operation $f^{a_{j+1}}_{A_{j+1}}$ adds a point every $a_{j+1}$ units. Thus under the $f^{a_{j+1}}_{A_{j+1}}$ operation any section $sec$ of length $\text{lcm}(x,a_{j+1})$ is expanded to a section $\grave{sec}$ of length
\[l = \text{lcm} (x,a_{j+1}) + \frac{\text{lcm}(x,a_{j+1})}{a_{j+1}}.\]

Furthermore, after the section of length $\text{lcm}(x,a_{j+1})$ has been expanded, the following section in $c(s)$ of length $\text{lcm}(x,a_{j+1})$ will also be expanded to a section of length $l$. Via the properties of the $\text{lcm}$ function, the positions of points from $p$ will be the same in $\grave{sec}$, the following section of length $l$, and all following sections after that.
Thus a $x$-periodic part in $c(s)$ will be turned into a $l$-periodic part in $f^{a_{n+1}}_{A_{n+1}}(c(s))$.

Thus by induction, every defined point (not in the seed) in $(\cap^{a_{1}} A_{1} \cap^{a_{2}} A_{2} \cap^{a_{3}} \ldots \cap^{a_{n}} A_{n})^{\infty}$ is in a periodic part.
By theorem \ref{periodicparttoeplitz}, $(\cap^{a_{1}} A_{1} \cap^{a_{2}} A_{2} \cap^{a_{3}} \ldots \cap^{a_{n}} A_{n})^{\infty}$  is (almost) Toeplitz.
\end{proof}

We will now introduce a natural extension of the well-known concept of a substitution.
The major change will be that the defining function of the substitution is based on \emph{words}, not letters.

\begin{defn}
Let $A$ be an alphabet, $A^{\ast}$ be the set of finite words over $A$, and $W \subset A^{\ast}$.
Let $A^{\infty}$ be the set of bi-infinite words, and $A^{\infty}w$ be the set of bi-infinite words with unique decomposition into a sequence of elements from $W$.
Let $A^{\ast}w$ be the set of finite words with unique decomposition into a sequence of elements from $W$.

The function  $\sigma : W \mapsto A^{\ast}$ is a valid substitution on words if it satisfies the following two properties;
Firstly it must induce a well defined function  $\sigma^{\infty} : A^{\ast}w \mapsto A^{\ast}w$.
Secondly for all $w \in W$, there exists $N$ such that $|w| < | \sigma^{n} (w)|$.

A $\sigma$-substitution tiling is an element $\Lambda \in A^{\infty}w$ when $\Lambda \subset Im (\sigma^{\infty})^{N}$ for all $N \in \mathbb{N}$.


For example a function $\sigma$ defined as $\sigma (01) = 0110$ , $\sigma (10) = 1001$  would induce a function $\sigma^{\infty}$ sending $ 011010 \ldots$ to $ 011010011001 \ldots$.

\end{defn}

\begin{thm}

$(\cap^{a_{1}} A_{1} \cap^{a_{2}} A_{2} \cap^{a_{3}} \ldots \cap^{a_{n}} A_{n})^{\infty}$ is a fixed point of a substitution on words.
\end{thm}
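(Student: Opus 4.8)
The plan is to mimic the structure of the earlier theorem that $(\underline{0} \cap \underline{1})^{\infty}$ is a fixed point of the substitution $\sigma : 0 \mapsto 0101,\ 1 \mapsto 1101$, but now working with the cyclic word-composition $\Phi = f^{a_n}_{A_n} \circ f^{a_{n-1}}_{A_{n-1}} \circ \cdots \circ f^{a_1}_{A_1}$. The key observation is that applying $\Phi$ once is exactly what it means to increase $m$ by one in $(\cap^{a_1} A_1 \cap^{a_2} A_2 \cdots \cap^{a_n} A_n)^{m}$, so $\Lambda := (\cap^{a_1} A_1 \cdots \cap^{a_n} A_n)^{\infty}$ is invariant under $\Phi$ by Theorem \ref{a1anlimitexists}. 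The substitution on words will be read off from how $\Phi$ acts on the null array $\ast$.

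First I would analyse $\Phi(\ast)$. By the argument in Theorem \ref{A1toAninftyisToeplitz}, $\Phi(\ast)$ is periodic: the points inserted at each stage lie in periodic parts whose periods are controlled by the $a_i$ and the $p(A_i)$, so the whole array $\Phi(\ast)$ has period $P := \prod_{i=1}^{n}(a_i+1) \cdot (\text{a common period of the }A_i)$ — more precisely $P = \operatorname{lcm}$ of the periodic-part periods, and the key point is only that some finite period $P$ exists, with exactly one undefined ``seed'' slot per period (the position $\sigma$ from the proof of Theorem \ref{a1anlimitexists}). Let $W$ be the set of length-$P$ blocks that $\Phi$ produces between consecutive seed positions, with the seed letter left as a placeholder; the substitution $\sigma$ sends the word occupying one ``super-cell'' of the pre-image to the corresponding length-$P$ block of $\Phi(\Lambda)$. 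Because $\Phi$ acts on an array by a fixed local rule that only looks at a bounded window (each output position is determined by iterating the $\cap^{a_i}$ definitions finitely often), the image $\Phi(t)$ of a generic array $t$ decomposes by concatenation: the seed letter $t_j$ of the $k$-th super-cell is sent to a length-$P$ word beginning with $t_j$ and depending only on $t_j$ (all other letters of that block come from the periodic $A_i$), and these words concatenate to give $\Phi(t)$ — this is precisely the induced map $\sigma^{\infty}$. One must check the two axioms of ``valid substitution on words'': well-definedness of $\sigma^{\infty}$ on $A^{\ast}w$ (which follows from the fixed local rule respecting the decomposition) and length growth $|w| < |\sigma^N(w)|$ (immediate, since $P \geq 2$ so each letter blows up to a block of length $P$).

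Finally, to conclude $\sigma(\Lambda) = \Lambda$: the seed point of $\Lambda$ has a fixed value for all $m \geq 1$ (Theorem \ref{a1anlimitexists} and its proof give that the seed is invariant under every $\cap^{a_i} A_i$), so the length-$P$ block it generates is fixed for $m \geq 2$; the blocks those letters in turn generate are fixed for $m \geq 3$; and so on, so every defined point of $\Phi(\Lambda)$ agrees with $\Lambda$, i.e. $\Phi(\Lambda) = \Lambda$. Since $\Phi$ realises $\sigma^{\infty}$ on the $W$-decomposition of $\Lambda$, this says $\sigma(\Lambda) = \Lambda$, so $\Lambda$ is a fixed point of the substitution on words $\sigma$.

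I expect the main obstacle to be bookkeeping around the seed and the decomposition into $W$: one needs to verify that $\Lambda$ genuinely has a \emph{unique} decomposition into blocks of the form dictated by $W$ (so that $\sigma^{\infty}$ is well-defined on it and $\Lambda \in A^{\infty}w$), and that the period $P$ and the block set $W$ are consistent across all the interleaved $A_i$ with their (possibly distinct) periods. The interleaving of several periodic arrays with different periods makes the explicit period $P$ and the explicit form of $\sigma$ messy, which is presumably why the theorem is stated without computing them; the conceptual content — fixed local rule plus $\Phi$-invariance of the limit plus cascading fixedness from the seed outward — is exactly as in the $(\underline{0}\cap\underline{1})^\infty$ case.
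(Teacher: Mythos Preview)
Your overall strategy matches the paper's: define $\Phi = f^{a_n}_{A_n}\circ\cdots\circ f^{a_1}_{A_1}$, observe that $\Phi(\ast)$ is periodic, read off a substitution from one period, and use $\Phi$-invariance of the limit together with the seed to conclude. That part is fine.

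The genuine gap is your claim of ``exactly one undefined `seed' slot per period''. This is false in general: the density of $\ast$'s in $\Phi(\ast)$ is $\prod_{i=1}^n \frac{a_i}{a_i+1}$, so one minimal period $S$ contains $\prod_i a_i$ undefined positions, not one. (See Example~2 in the paper, $(\cap^2\underline{A}\cap\underline{B}\cap^4\underline{C})^{\infty}$, where the repeating block carries \emph{two} unknowns and the substitution reads $x_{2n-1}x_{2n}\mapsto x_{2n-1}Bx_{2n}BCAB\,x_{2n-1}BCx_{2n}BABC$.) Consequently your assertion that the length-$P$ block ``depends only on $t_j$'' is wrong: it depends on a \emph{word} of length equal to the number of $\ast$-slots in $S$. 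This is precisely why the theorem is phrased as a substitution on \emph{words}, not on letters, and your $W$ must consist of words of that length, not of single seed letters.

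A second point you miss entirely is the case where the seed word $w$ is shorter than the number $n$ of $\ast$-slots in $S$. In that situation one cannot bootstrap the substitution immediately; the paper handles it by first iterating $\Phi$ on $w\ast\ast\ast\cdots$ until the first $n$ positions become defined, and only then applying the substitution rule. (Again Example~2 illustrates this: the seed is the single letter $A$, but two letters are needed, so one applies $\Phi$ once to obtain the seed $AB$.) Your cascading-fixedness argument implicitly assumes the seed already fills one full input block, which need not hold.
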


\begin{proof}

For brevity, we will define the function $f^{a_{n}}_{A_{n}} \circ f^{a_{n-1}}_{A_{n-1}} \circ \ldots \circ f^{a_{1}}_{A_{1}}$ as $F$.
Consider $F(\ast)$.
By the proof of theorem \ref{A1toAninftyisToeplitz}, points defined by $f^{a_{i}}_{A_{i}}$ will be in a $p_{i}$-periodic part, where $p_{i}$ is some finite number. Thus, since we have only applied a finite number of $f^{a_{i}}_{A_{i}}$ operations, there are a finite number of periodic parts. Thus $F(\ast)$ is periodic (with period less than or equal to $p_{1}\cdot p_{2} \cdot \ldots \cdot p_{n}$).

Since $F(\ast)$ is periodic, there must be a repeating sequence $S$ of minimum length (starting at the origin). For example, let $K_{i}$ be an array with alphabet $\{ k_{i} \}$. Then $f_{K_{2}} \circ f^{2}_{K_{1}} (\ast)$ would have a repeating sequence $\ast$ $k_{2}$ $\ast$ $k_{2}$ $k_{1}$ $k_{2}$.

By theorem \ref{a1anlimitexists}, there exists a finite number of \lq seed points' near the origin that are unchanged by $f^{a_{i}}_{A_{i}}$, for any $i \in \mathbb{N}$.
Call the word formed by these points $w = w_{1} w_{2} \ldots w_{k}$.
Consider the number $n$ of undefined points in $S$. If $|w| \geq n$, then we can repetitively apply our function $F$ to the first $n$ points of $w$ to define the entire tiling.
This is equivalent to a substitution rule sending $w_{1} \ldots w_{n}$ to the first $|S|$ terms of $F(\grave{w})$ where $\grave{w} = w_{1} w_{2} \ldots w_{n} \ast \ast \ast \ldots$.
For our earlier example of $f_{K_{2}} \circ f^{2}_{K_{1}} (\ast)$, this would give a substitution rule $w_{1} w_{2} \mapsto w_{1}$ $k_{2}$ $w_{2}$ $k_{2}$ $k_{1}$ $k_{2}$.

If $|w| < n$ then repetitively apply $F$ to $\grave{w}$ until the first $n$ points are defined. Then apply $F$ to these $n$ points as before.

\end{proof}

Let us consider some examples, in order to improve our intuition regarding this process.

\subsection{Examples}

Define $\underline{X}$ as the array where all points in the array have type $X$, where $X$ can be any fixed symbol.

\begin{example}[$(\cap \underline{0} \cap \underline{1} \cap \underline{2})^{\infty}$]

\begin{IEEEeqnarray*}{rCl}
(f_{\underline{2}} \circ f_{\underline{1}} \circ f_{\underline{0}}) (\ast) & = &  f_{\underline{2}} \circ f_{\underline{1}} ( \ast 0 \ast 0 \ast 0 \ldots)
\\
& = & f_{\underline{2}} ( \ast 1 0 1 \ast 1 0 1 \ldots )
\\
& = & \ast 2 1 2 0 2 1 2 \ast 2 1 2 0 2 1 2 \ldots
\end{IEEEeqnarray*}

The repeating sequence is thus $\ast 2 1 2 0 2 1 2$.
Thus the substitution is $x_{i} \mapsto x_{i} 2 1 2 0 2 1 2$.

The first array in $(\cap \underline{0} \cap \underline{1} \cap \underline{2})^{\infty}$ is the $\underline{0}$ array, and only the point at the origin in $\underline{0}$ is unaffected by $f_{\underline{0}}$, $f_{\underline{1}}$ and $f_{\underline{2}}$.
Thus the seed of $(\cap \underline{0} \cap \underline{1} \cap \underline{2})^{\infty}$ is the word \lq $\text{ }0$'.
\lq $\text{ }0$' has one point in it, which is equal to the number of unknown points in $\ast 2 1 2 0 2 1 2$.
Thus $0$ is our initial word, and $x_{i} \mapsto x_{i} 2 1 2 0 2 1 2$ is our substitution.

\end{example}

\begin{example}[$(\cap^{2} \underline{A} \cap \underline{B} \cap^{4} \underline{C})^{\infty}$]

Define a sequence as follows: $\hat{\ast} = \ast_{1} \ast_{2} \ldots \ast_{n} \ast_{n+1} \ldots$
Now consider $f^{4}_{\underline{C}} \circ f_{\underline{B}} \circ f^{2}_{\underline{A}} (\hat{\ast})$.

\begin{IEEEeqnarray*}{rCl}
f^{4}_{\underline{C}} \circ f_{\underline{B}} \circ f^{2}_{\underline{A}} (\hat{\ast}) & = &  f^{4}_{\underline{C}} \circ f_{\underline{B}} ( \ast_{1} \ast_{2} A \ast_{1} \ast_{2} A \ldots)
\\
& = & f^{4}_{\underline{C}} ( \ast_{1} B \ast_{2} B A B \ast_{1} B \ast_{2} B A B \ldots )
\\
& = & \ast_{1} B \ast_{2} B C A B \ast_{1} B C \ast_{2} B A B C \ldots
\end{IEEEeqnarray*}

Therefore our substitution is:

\[x_{2n-1} x_{2n} \mapsto x_{2n-1} B x_{2n} B C A B x_{2n-1} B C x_{2n} B A B C  \]

This substitution rule has two unknown points ($x_{2n-1}$ and $x_{2n}$), but the seed of $(\cap^{2} \underline{A} \cap \underline{B} \cap^{4} \underline{C})^{\infty}$ has only one point in it (namely \lq $A$').
Thus apply $(f^{4}_{\underline{C}} \circ f_{\underline{B}} \circ f^{2}_{\underline{C}}$ to the seed until the first $2$ points are defined, as follows.

\begin{IEEEeqnarray*}{rCl}
f^{4}_{\underline{C}} \circ f_{\underline{B}} \circ f^{2}_{\underline{C}}(A \ast \ast \ast \ast \ldots) & = &  f^{4}_{C} \circ f_{\underline{B}} (A \ast A \ast \ast A \ast \ast A \ldots )
\\
& = & f^{4}_{\underline{C}} ( A B \ast B A B \ast B \ast \ldots )
\\
& = & A B \ast B C A B \ast B  \ldots
\end{IEEEeqnarray*}
Thus the seed of our substitution is $AB$.
\end{example}

\section{The $\cap$ operation in two dimensions.}

We have described the $\cap$ operation in one dimension. We will now extend the $\cap$ operation to higher dimensions and illustrate how this can be used to recreate an aperiodic tiling. For reasons of brevity we will merely give the definition of the operation in two dimensions, and a motivating example. Full details and proofs can be found in [REF THESIS].




As the reader may recall (ie figure \ref{layers1D}), in the $1$ dimensional case we used overlaying infinite strips as a physical motivation behind the $\cap$ operation.
Any generalization to two dimensions should be roughly equivalent to taking two unit square tilings on overhead transparencies, overlaying them, and reading off a new tiling from the result.

Consider the following definition.

\begin{defn}[$A \cap B$ for $\mathbb{Z}^{2}$-arrays]

Let $A: \mathbb{Z}^{d} \mapsto \Sigma$ and $B: \mathbb{Z}^{d} \mapsto \Sigma$ be $\mathbb{Z}^{2}$ arrays with the standard lattice with points at integer positions.

$A \cap B$ is an array with alphabet $\Sigma$. The underlying lattice of the array differs from the standard lattice, and not in an intuitive way.

Let $f_{1}=(\frac{1}{\sqrt{2}},\frac{1}{\sqrt{2}})$, $f_{2}=(\frac{-1}{\sqrt{2}},\frac{1}{\sqrt{2}})$.

Note that $|f_{1}|=|f_{2}|=1$.
Let $Latt_{A \cap B} = \{a_{1}f_{1} + a_{2}f_{2} | a_{1}, a_{2} \in \mathbb{Z}^{d}\}$ be the underlying lattice for $A \cap B$.

Then define;

\begin{equation*}
(A \cap B)(m,n) =
\begin{cases}
A((\frac{m-n}{2},\frac{m+n}{2})) & \text{if } m + n \text{ is even},\\
B((\frac{m-n-1}{2},\frac{m+n-1}{2})) & \text{if } m + n \text{ is odd}.

\end{cases}
\end{equation*}

\end{defn}

\begin{defn} \label{gAX}
Let $X$ and $A$ be $\mathbb{Z}^{2}$ arrays.
Then define $g_{A}(X)$ to be the function sending $X$ to $X \cap A$.
\end{defn}

These definitions are of the same format as the definition of $A \cap B$ in the one dimensional case.
It can be rephrased as follows

A point $x e_{1} + y e_{2}$ in $A$ is sent to the point $ x f_{1} - y f_{2}$ in $A \cap B$.
A point $x e_{1} + y e_{2}$ in $B$ is sent to the point $ (x+1) f_{1} + y f_{2}$ in $A \cap B$.
This covers the whole of the array $A \cap B$.

\begin{defn}
Denote an array with underlying lattice generated by the vectors $e_{1}, e_{2}$ as a \emph{(unit) square array}.
Denote an array with underlying lattice generated by the vectors $f_{1}, f_{2}$ as a \emph{diamond array}.
\end{defn}

For an intuitive construction of the $(A \cap B)$ operation, see the following remark;

\begin{remark}
Take two unit square tilings, $A$ and $B$ such that the origin of $\mathbb{R}^{2}$ is the centre of a tile in both $A$ and $B$. Switch to the lattice view of tilings (ie, consider the array with underlying lattice formed by the centres of the unit square tiles).
Shift $B$ by the vector $v=(\frac{\sqrt{2}}{2},\frac{\sqrt{2}}{2})$.
Overlap the two lattices. The vector $v$ has been chosen to maximize the distance between the set of tile centres of $A$, and the set of tile centres from $B$, i.e. maximizing $d(A,B)$.
From the tile viewpoint, this corresponds to choosing the vector $v$ to place the centre of tiles from $B$ at the vertices of tiles from $A$.

Scale up so that $d(B,A)=1$. The resultant array is no longer generated by the standard basis of $\mathbb{R}^{2}$. We will thus change the generating vectors, to a minimum vector between points in $A$ and $B$, and a vector orthogonal to that minimum vector.

This is reflected in the non-intuitive function \lq$\frac{n-m-1}{2}$', and the basis vectors $e_{1}$ and $e_{2}$ changing to $f_{1}$ and $f_{2}$.
See figure \ref{AandBandAcapB} for a pictorial representation of $A$, $B$ and $A \cap B$.
\end{remark}

\begin{figure}[!hbtp]
\includegraphics[angle=0, width=0.8\textwidth]{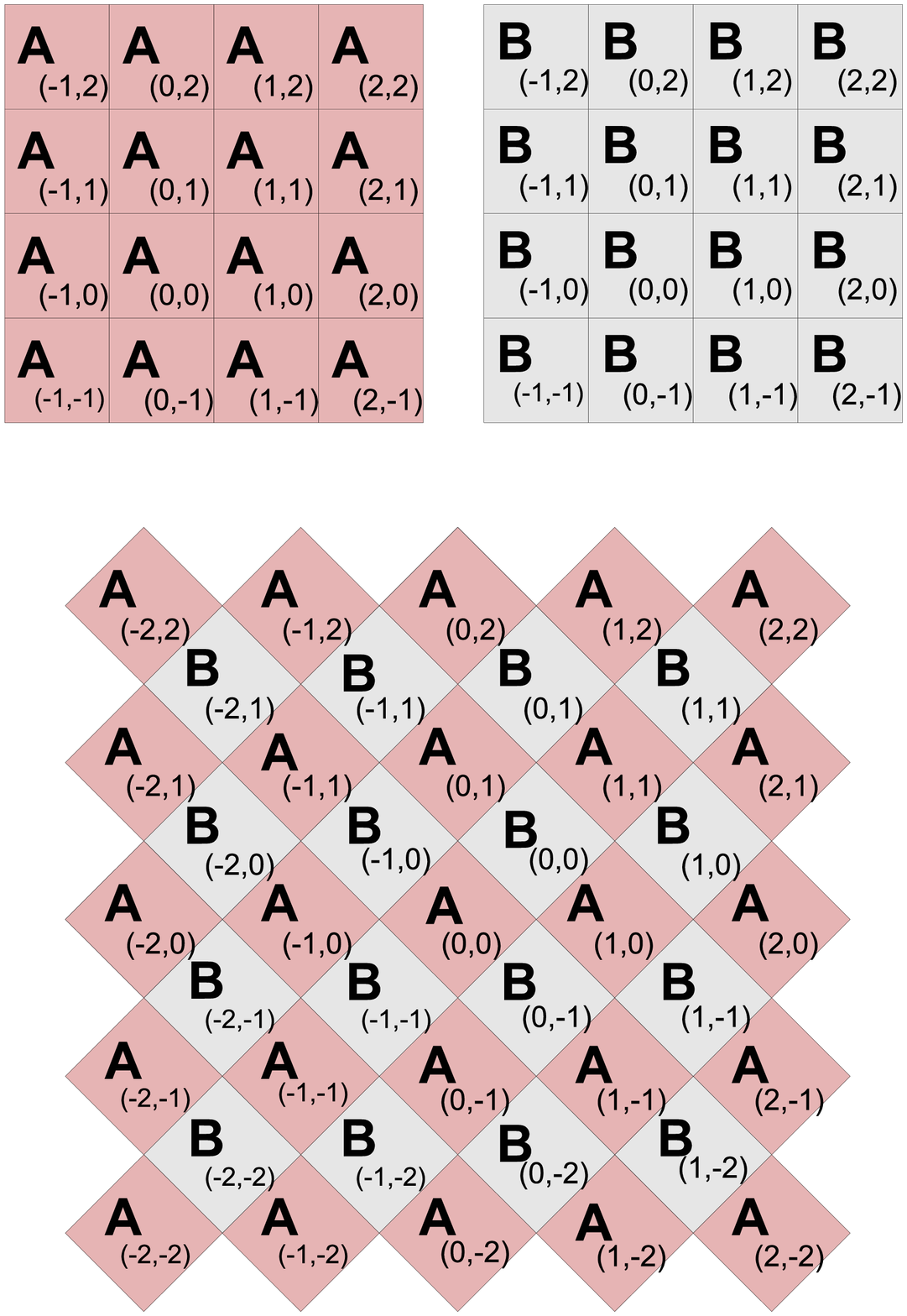}
\caption{Patches of the tilings $A$, $B$ and $A \cap B$}
\label{AandBandAcapB}
\end{figure}









As the reader will be aware, $A\cap B$ has a different underlying lattice to $A$ or $B$. The corresponding tiling is not a tiling by unit squares, but by unit diamonds.

If we wish to consider multiple $\cap$ operations, it is much more natural to consider sequential \emph{pairs} of $\cap$ operations.

\subsection{$\cap A \cap B$}

We already have a function $g_{A}$ (from definition \ref{gAX}) which applies an unit square array $A$ to some unit square array $X$ to produce a diamond array $X \cap A$.

In order to consider the limit of this $g_{A}$ function (and hence the $\cap$ function), we must find out how to apply it to a diamond tiling.
Let $A$ and $B$ be unit square arrays. Let X be some unit square array.

\begin{remark}
Like in the $1$D case, we will use a null array to help clarify how the $(\cap A \cap B)$ operation builds up the periodic parts of $(\cap A \cap B)^{\infty}$ through each application of $(\cap A \cap B)$, and to show how much of the array is uniquely defined by the repeated application of $(\cap A \cap B)$.
\end{remark}

In the previous section, we showed that $X \cap A$ is a diamond array. Thus $X \cap A$ has different basis vectors to a unit square array.
Let us see what effect naively applying the $\cap$ operation to a diamond array will have.

From the previous chapter, the $g_{B}$ operation applied to any array $X$, will double the distance of every point from the origin, (and then add points from $B$).

Our points in $(X \cap A)$ are of the form $\frac{1}{\sqrt{2}}(a-b,a+b)$ for $a,b \in \mathbb{Z}$.

Thus $g_{B} (X \cap A) = 2 (a-b, a+b)$ for $a,b \in \mathbb{Z}$, which, when represented as a tiling, is the subset shown in figure \ref{capBtoXcapA} (points from $X$ and $A$ are labelled as such).

\begin{figure}[!hbtp]
\includegraphics[angle=90, width=0.8\textwidth]{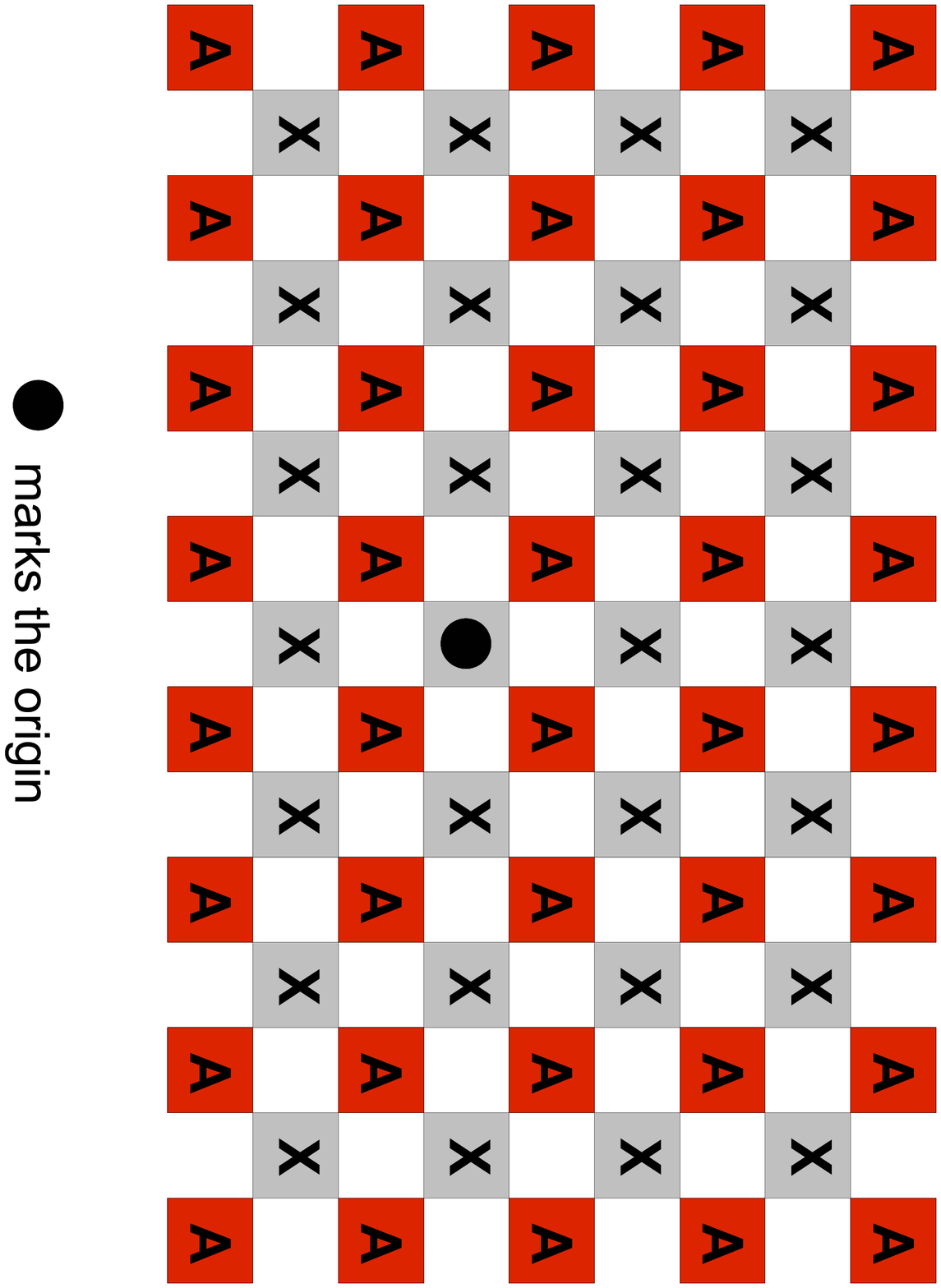}
\caption{The image of $X \cap A$ under the $\cap$ operation}
\label{capBtoXcapA}
\end{figure}

This is precisely half of an unit square array. Thus we need to ensure that the points from array $B$ fill the gaps.
If $B$ was a diamond array, this would simply be a case of scaling $B$ by $\sqrt{2}$, and shifting it by a small vector.
Of course, $B$ is not a diamond array (though you could easily define a related $\cap$ operation where $B$ is a diamond array.

Instead we will alter how the $\cap$ operation will effect points from the $B$ array, as follows;

\begin{defn}[$A \cap B$, if $A$ is a diamond array]
Let $A$ be a diamond array, with basis vectors $f_{1}, f_{2}$.
Let $B$ be a unit square array, with standard euclidean basis vectors $e_{1}, e_{2}$.

Then

\begin{equation*}
A \cap B(m,n) =
\begin{cases}
A((\frac{m+n}{2},\frac{-m+n}{2})) & \text{if } m + n \text{ is even},\\
B((\frac{m+n-1}{2},\frac{-m+n+1}{2})) & \text{if } m + n \text{ is odd}.

\end{cases}
\end{equation*}

The underlying lattice for $A \cap B$ will be $\{n_{1} e_{1} + n_{2} e_{2} | n_{1}, n_{2} \in \mathbb{Z}\}$.
\end{defn}

This definition can be rephrased as follows.

A point $x f_{1} + y f_{2}$ in $A$ is sent to the point $ (x-y) e_{1} + (x+y) e_{2}$ in $A \cap B$.
A point $x e_{1} + y e_{2}$ in $B$ is sent to the point $ (x+-y+1) e_{1} + (x+y) e_{2}$ in $A \cap B$.
This covers the whole of the array $A \cap B$.

Effectively, this function takes the underlying lattice of the square array $B$, rotates it anti-clockwise by $\frac{\pi}{4}$, then moves the point of the lattice which was over the origin, to the point $(0,1) \in \mathbb{Z}^{2}$.
The underlying lattice of $A$ is then used to fill in the gaps in $\mathbb{Z}^{2}$, as in figure \ref{BintoXcapA}.

\begin{figure}[!hbtp]
\includegraphics[angle=90, width=0.8\textwidth]{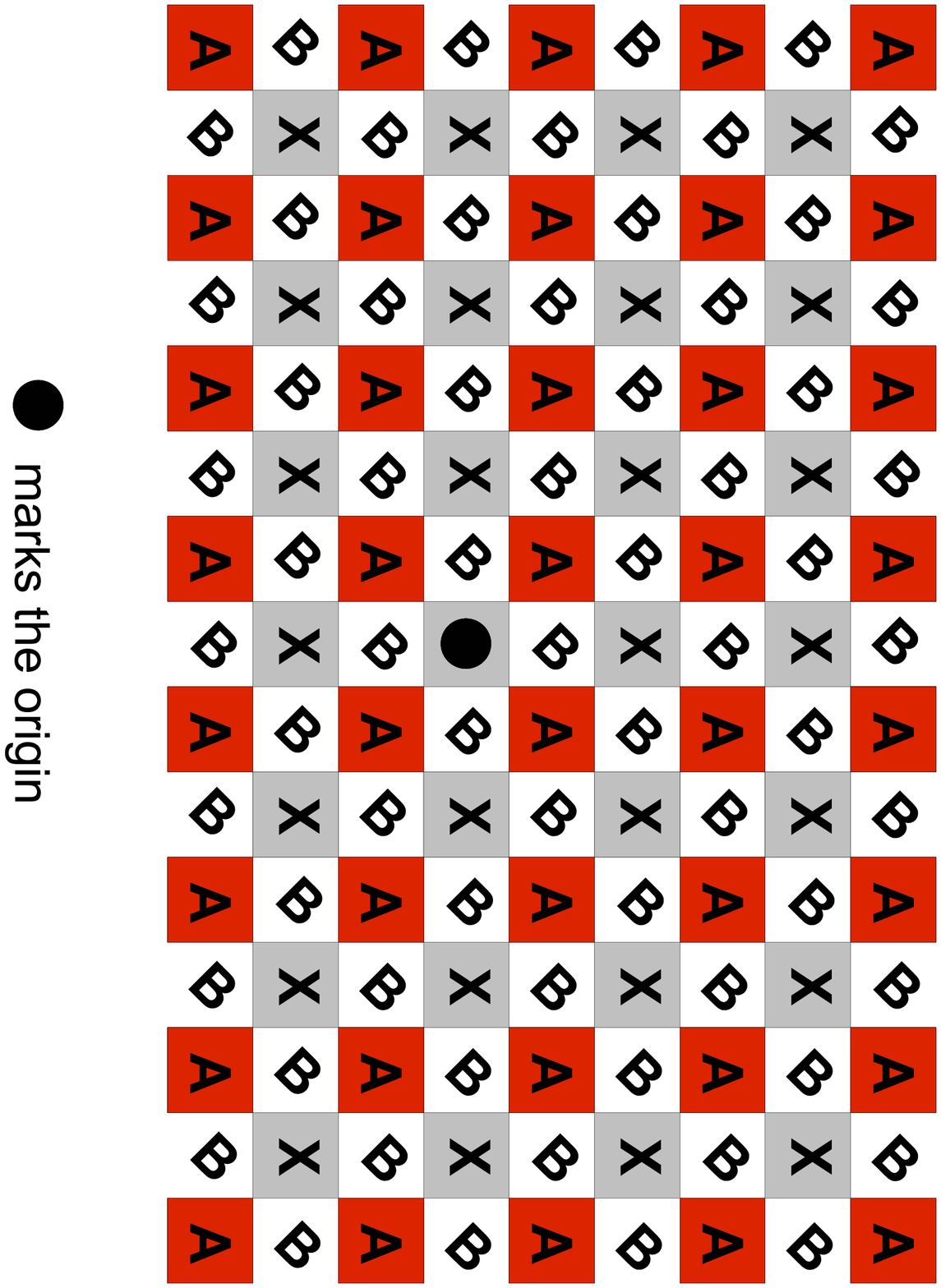}
\caption{$(X \cap A \cap B)$ (schematic)}
\label{BintoXcapA}
\end{figure}


Obviously the rotation effect is not ideal, but its effect can be ignored by choosing a $B$ tiling which has only one prototile (as we will do later).

\subsection{Motivation}

As motivation for the $\cap$ operation applied to diamond tilings, let us extrapolate from our physical motivation with square tilings.
With square tilings, we aimed to \lq overlay' the tilings, placing the second square tiling $B$ such that every centre of a tile in $B$ was as far away from the centre of tiles in $A$ as possible. We would then scale up this new tiling to ensure that the minimum distance between centres of tiles was $1$.

The problem we will get by applying this motivation to a diamond tiling ($X \cap A$) and a square tiling $B$ is that we have to rotate $B$ to ensure the points of $B$ are as far away from points of $A$ as possible (ie, maximising $d(B,A)$).


We will have several choices of how much we rotate $B$ by, namely ($\frac{\pi}{4} + k \frac{\pi}{2}$, $k \in \{0,3\}$), just as we have several choices of how far we translate $B$ by (any $(a,b)$ such that $a+b$ is odd).
We will choose $\frac{-\pi}{4}$ and $(0,1)$ in this thesis.

We have defined $X \cap A$, when $X$ is a diamond array and $A$ a square array. We will now define $\cap A \cap B$.

\begin{defn}
Let $X$ be a unit square tiling.
Let $A$ and $B$ be unit square tilings.
Then $X \cap A \cap B$ is evaluated from left to right, as in the $1$ dimensional case.
Similarly longer strings of $\cap$ operations are evaluated from left to right, for example;
$X(\cap A \cap B)^{n} =(X ( \cap A \cap B)^{n-1}) \cap A) \cap B$

Also, if the limit of $X(\cap A \cap B)^{n}$ as $n \rightarrow \infty$ exists, denote it as $X(\cap A \cap B)^{\infty}$.

\end{defn}


We can also consider the $g_{B}$ operation as a function from the domain of unit square tiling into the domain of diamond square tilings.
Similar functions can be defined from diamond square tilings into unit square tilings (distinguishable by the domain specified).

\begin{defn}
Define the function $g_{B} \circ g_{A} (X)$ as $f_{B} (X \cap A)$.
\end{defn}

\begin{remark}
Note that $\cap A \cap B$ doubles the distance of any point $(x,y) \in X$ from the origin.
This is because $\cap A$ sends the $(x,y)$ to $\sqrt{2} (x,y)$, then $\cap B$ sends $\sqrt{2}(x,y)$ to $2(x,y)$.
Thus in the limit of $(\cap A \cap B)^{n}$. there will be only one tile from $X$ remaining, at the origin. Thus in the current form, $X \cap (A \cap B)^{\infty}$ is not quite Toeplitz, since the origin is not defined solely by $\cap A \cap B$.
In later examples, we will set $X = B$ because of this problem, to sidestep it.
\end{remark}

\begin{thm}
If $A$ and $B$ are unit square arrays, $(\cap A \cap B)^{\infty}$ is well defined.
\end{thm}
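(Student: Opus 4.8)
The plan is to imitate the one-dimensional arguments of Theorems~\ref{limit0cap1exists} and~\ref{a1anlimitexists}: identify a small ``seed'' region that is frozen under the operation, show that every other lattice point eventually ceases to depend on the starting array, and conclude pointwise (hence array) convergence. First I would set up the bookkeeping. Write $G=g_{B}\circ g_{A}$. Then $(\cap A\cap B)^{1}=g_{B}(A)=A\cap B$ and $(\cap A\cap B)^{m+1}=G\big((\cap A\cap B)^{m}\big)$, so $(\cap A\cap B)^{m+k}=G^{k}\big((\cap A\cap B)^{m}\big)$ for all $m\ge 1,\ k\ge 0$. Each $(\cap A\cap B)^{m}$ is built from an odd number of $\cap$'s, hence is a diamond array; so all of them live on the common underlying lattice $Latt_{A\cap B}$ and asking for a limit makes sense.

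The heart of the proof is to describe one application of $G$ to an arbitrary diamond array $Y$. Composing the two relevant $\cap$-formulas (the diamond-input one for $g_{A}$, then the square-input one for $g_{B}$) and keeping track of the square $\leftrightarrow$ diamond change of lattice, one checks that
\[
\big(G(Y)\big)(p,q)=
\begin{cases}
Y\!\left(\dfrac{p}{2},\dfrac{q}{2}\right) & \text{if } p \text{ and } q \text{ are both even},\\[1.2ex]
\text{a letter determined by } A,B \text{ only} & \text{otherwise}.
\end{cases}
\]
This is exactly the ``doubling'' already recorded in the remark preceding the statement, together with the fact that every freshly inserted lattice point receives a letter of $A$ or of $B$. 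In particular $G$ fixes the value at the origin, and by iteration $\big(G^{k}(Y)\big)(p,q)$ is determined by $A$ and $B$ alone unless $(p,q)\in 2^{k}\mathbb{Z}^{2}$, in which case it equals $Y(p/2^{k},\,q/2^{k})$.

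Finally I would assemble the limit. Fix a lattice point $(p,q)$. If $(p,q)=(0,0)$ then $(\cap A\cap B)^{m}(0,0)=A(0,0)$ for every $m$, so the sequence is constant there; this single origin point plays the role that the length-$(\sigma-1)$ seed played in one dimension, collapsed to one point because $\cap A\cap B$ scales by exactly $2$. If $(p,q)\ne(0,0)$, write $(p,q)=2^{j}(c,d)$ with $(c,d)\notin 2\mathbb{Z}^{2}$; then for every $m_{0}\ge 1$ and every $k\ge j+1$,
\[
(\cap A\cap B)^{m_{0}+k}(p,q)=G^{k}\big((\cap A\cap B)^{m_{0}}\big)(p,q)
\]
is, by the displayed formula, a letter determined by $A$ and $B$ alone, hence one fixed value independent of both $m_{0}$ and $k$. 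So $\{(\cap A\cap B)^{m}(p,q)\}_{m}$ is eventually constant at every lattice point, which is precisely the condition for $(\cap A\cap B)^{\infty}$ to be well defined.

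The one place I expect real friction is the middle step: one has to compose the two different $\cap$-formulas correctly and verify both that the ``$Y$-part'' of $G(Y)$ is exactly the rescaled copy $2\mathbb{Z}^{2}$ and that its complement is filled purely by letters of $A$ and $B$. Once that formula is in hand, the convergence argument is, essentially line for line, the one used in the one-dimensional case, so little else needs to be said.
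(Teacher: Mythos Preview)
Your argument is correct and is essentially the same as the paper's: isolate the origin as the lone seed point, show that a single application of $g_{B}\circ g_{A}$ fills every position with at least one odd coordinate using only letters of $A$ and $B$, and then use the doubling property together with the $2$-adic valuation of $(p,q)$ to see that every nonzero lattice point stabilises after finitely many iterations. The only noteworthy difference is the ambient lattice: you read $(\cap A\cap B)^{m}$ via the one-dimensional convention as $A\cap B\cap A\cap\cdots\cap B$, hence as a diamond array, and carry out the parity analysis there; the paper instead works with $X(\cap A\cap B)^{n}$, a unit-square array, and performs the equivalent parity split ($x+y$ odd from $B$; $x,y$ both odd from $A$; $x,y$ both even recurse). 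Your explicit formula for $G(Y)$ makes the doubling step cleaner than the paper's somewhat informal appeal to the ``doubling property,'' but the two arguments are the same in substance.
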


\begin{proof}
Firstly, note that $X( \cap A \cap B)^{n}$ is a unit square tiling, for all $n$. Thus all we need to show is that each tile $t$ has an unique label, for all $X( \cap A \cap B)^{n}$ where $n > N_{t}$, where $N_{t} \in \mathbb{N}$.

Divide $X \cap A \cap B$ into three sets; the origin, points $(x,y)$ where $x+y$ is even, and tiles $(x+y)$ where $x+y$ is odd.
The origin of $X \cap A \cap B$ is unchanged under the operation $\cap A \cap B$ (being the origin of $X$).

The odd tiles of $X \cap A \cap B$ only depend on the tile types of $B$.
Since any tiling $X(\cap A \cap B)^{n}$ can be rewritten as $(X \cap A \cap B \ldots \cap B) \cap A \cap B$, these tiles are fixed for all $n \geq 1$.

Let us consider the tiles where $x+y$ is even.
The tiles in $X \cap A \cap B$ where $x$ \emph{and} $y$ are odd are uniquely defined by $A$.
Thus, since $X(\cap A \cap B)^{n}$ can be rewritten as $(X \cap A \cap B \ldots \cap B) \cap A \cap B$, this holds true for $X( \cap A \cap B)^{n}$ as well.
For the tiles where $x$ and $y$ are \emph{both even}, recall that the $g_{B} \circ g_{A}$ function doubles the distance of every point from the origin.
We know any point $(x,y)$ where $x+y$ is odd, is fixed for $n \geq 1$. Thus any point $(a,b) = 2^{k} (x,y)$ where $x+y$ is odd is fixed for $n \geq 1 +k$.
Call this the doubling property.

Take a point $(x,y)$, where $x$ and $y$ are even. By the prime factorization theorem, $x$ and $y$ can be decomposed uniquely into factors.
Thus $(x,y)$ can be rewritten as $(2^{q}s, 2^{r}t)$, where $s$ and $t$ are odd numbers (possibly $1$), and $q$ and $r$ are integers.
WLOG assume $q > r$. Via the doubling property, we know that after $r$ iterations of $g_{B} \circ g_{A}$, $(2^{q}s, 2^{r}t)$ will have the same label as $(2^{q-r}s,t)$, which is in the fixed set dependent on $B$ (since $2^{q-r}s$ is even, $t$ is odd, thus $2^{q-r}s +t$ is odd).
If $q=r$, then we have $(2^{q}s,2^{r}t)$ having the same label as $(s,t)$, where $s$ and $t$ are both odd numbers and thus fixed by $A$.

Thus every point in $X \cap A \cap B$ is fixed in the limit.
\end{proof}

For clarification, figure \ref{PartitionofXcapAcapBfinal} illustrates a patch of $X(\cap A \cap B)^{\infty}$, with colours added to indicate which tiles are defined by which iteration of $g_{A}$ and $g_{B}$.

\begin{figure}[!hbtp]
\includegraphics[angle=90, width=0.8\textwidth]{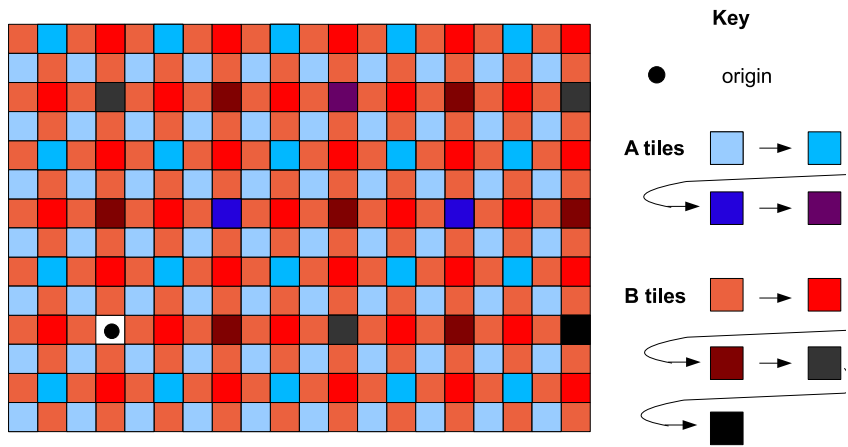}
\caption{Patch of $X(\cap A \cap B)^{\infty}$ \label{PartitionofXcapAcapBfinal}}
\end{figure}

\subsection{Illustrative example: Robinson tilings}

A Robinson tiling is a well known tiling from the field of aperiodic tilings \cite{ROBIN} \cite{GRUMSHEP}. A nonperiodic tiling is a tiling which is invariant only under a trivial translation. An aperiodic tiling is a nonperiodic tiling whose tiles cannot be rearranged into a periodic tiling.
A sample patch of a Robinson tiling can be found in figure \ref{robinsquare}.

\begin{figure}[!hbtp]
\includegraphics[angle=0, width=0.8\textwidth]{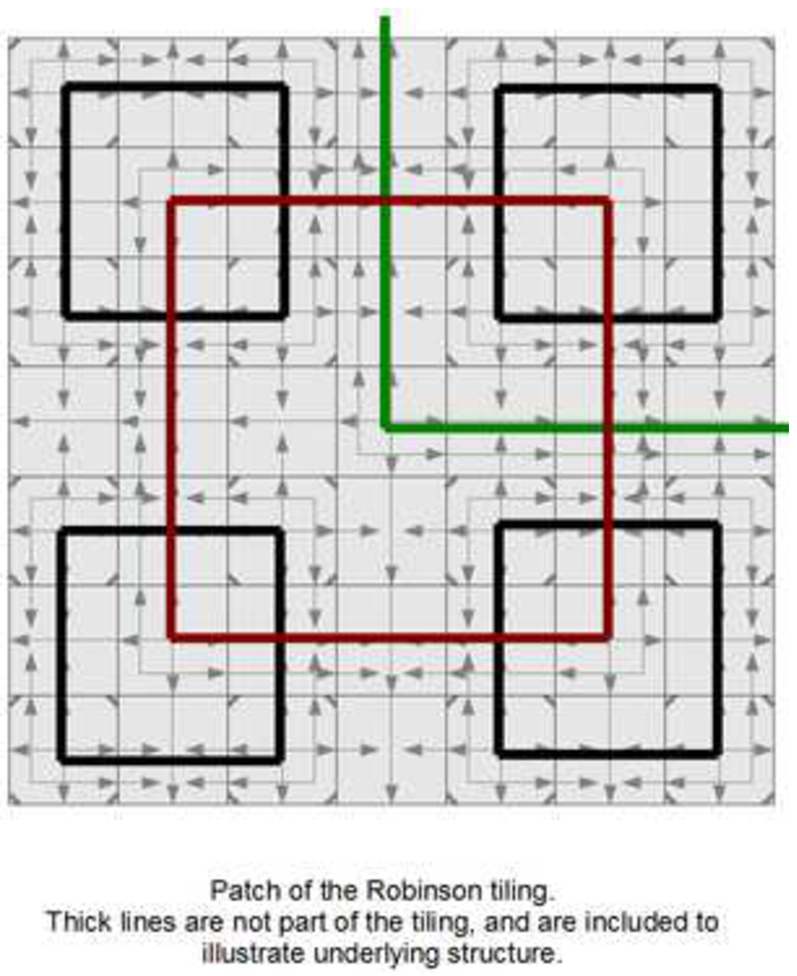}
\caption{Patch of the Robinson tiling \label{robinsquare}} 
\end{figure}

The defining characteristic of this tiling is a hierarchy of interlocking squares. In this tiling, four $3 \times 3$ squares form the four corners of a larger $7 \times 7$ square. Four of these $7 \times 7$ squares form the corners of a $15 \times 15$ square, which themselves form a $31 \times 31$ square, and so on.
This section will show how to construct the interlocking square structure via a two dimensional version of the $\cap$ function. (The structure is \lq Mutually Locally Derivable' to the Robinson tiling, as per the definition in \cite{SADUN}).
\\
\\
Let $B$ be a tiling by blank unit tiles.
Let $A$ be the periodic tiling depicted in figure \ref{robinsonperiodicB}, with $4$ prototiles with corner decorations. Note that the tile over the origin has a marking with a corner in the south west of the tile.

\begin{figure}[!hbtp]
\includegraphics[angle=0, width=0.8\textwidth]{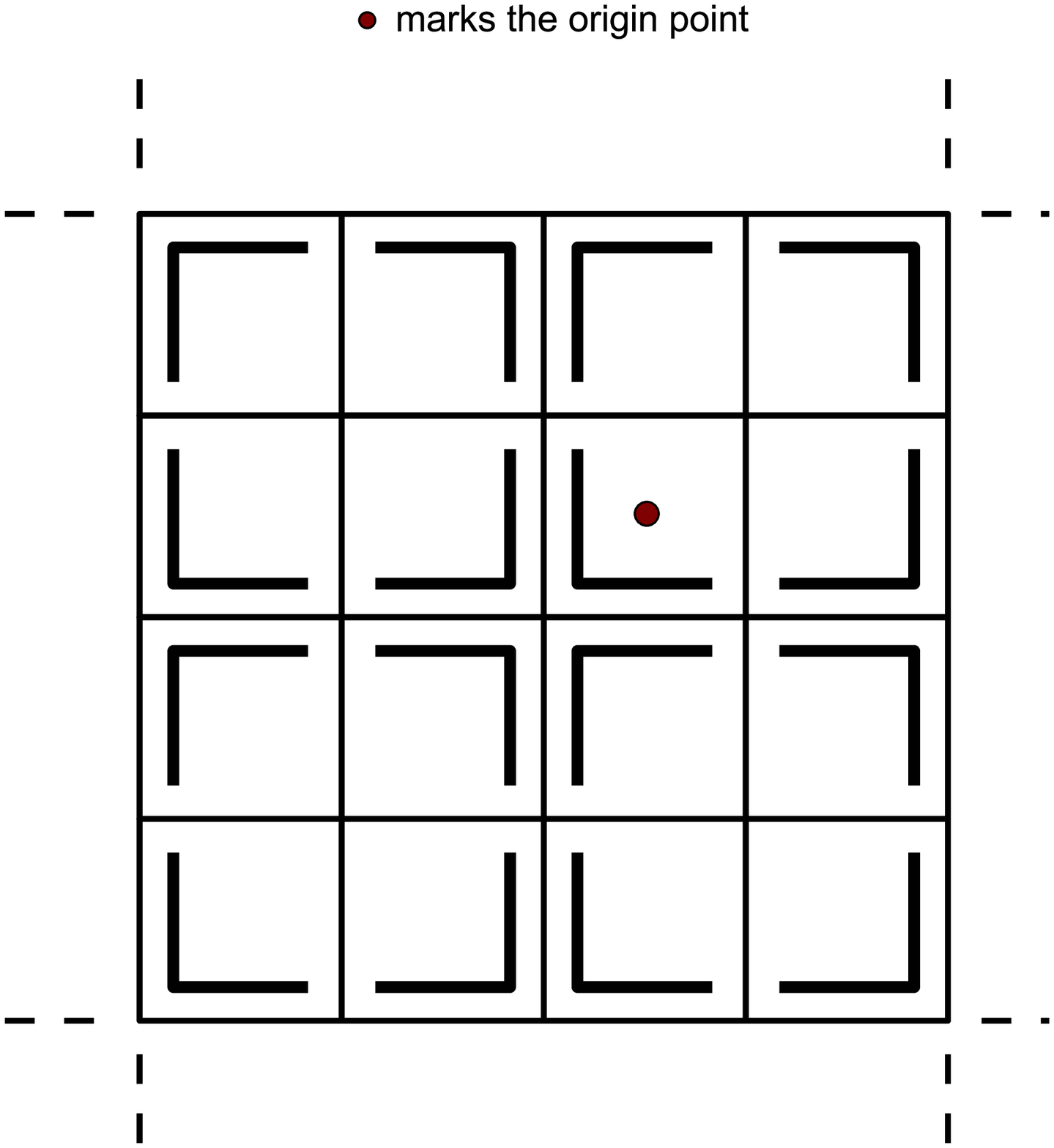}
\caption{$A$ \label{robinsonperiodicB}}
\end{figure}

Use $B$ as our seed tiling.
If we construct $B \cap A \cap B$, we produce the tiling depicted in figure \ref{RobBcapAcapBfinal}.

\begin{figure}[!hbtp]
\includegraphics[angle=90, width=0.8\textwidth]{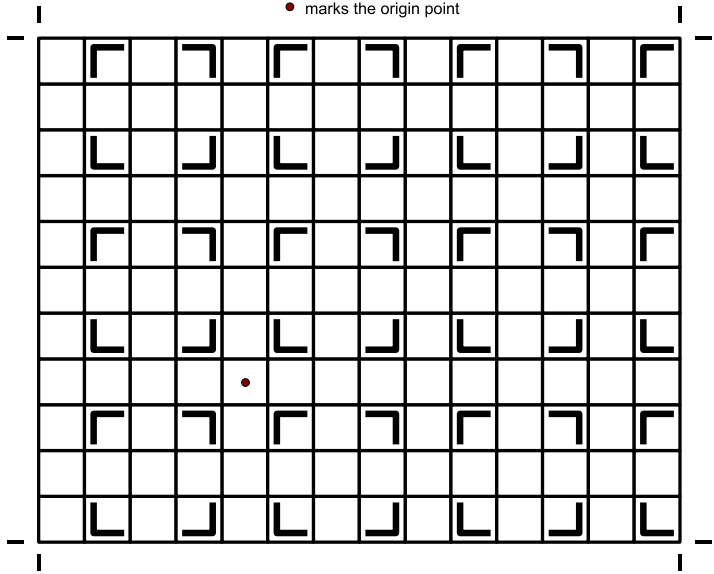}
\caption{$B (\cap A \cap B)$ \label{RobBcapAcapBfinal}}
\end{figure}

This pattern bears a certain similarity to the aperiodic Robinson tiling as shown in \ref{robinsquare}.
More precisely, the tiles from $A$ with corner decorations can be identified with cross tiles of the Robinson tiling. However only cross tiles which are corners of $3 \times 3$ tiles have a counterpart in $B \cap A \cap B$.

Let us consider $B( \cap A \cap B)^{2}$

\begin{figure}[!hbtp]
\includegraphics[angle=90, width=0.8\textwidth]{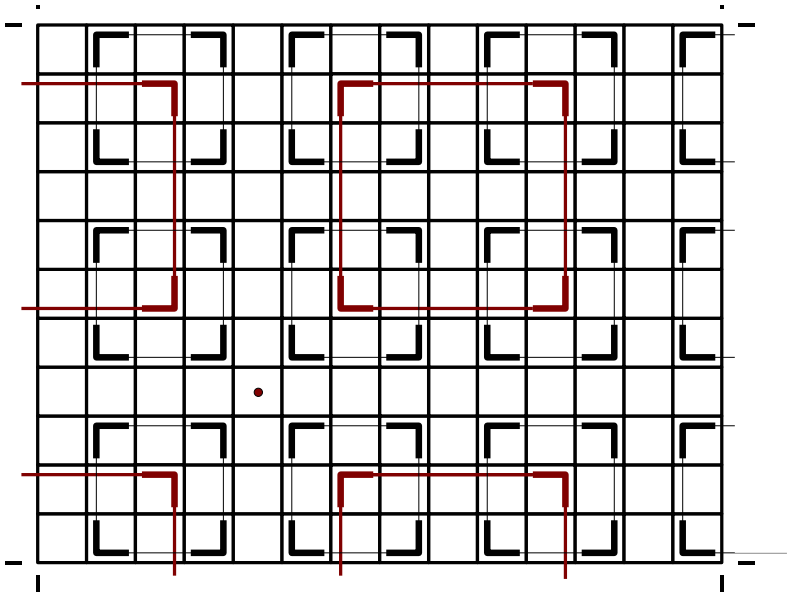}
\caption{$B (\cap A \cap B)^{2}$ \label{RobBcapAcapBcapAcapBalternate}}
\end{figure}

As the reader can see, applying $g_{B} \circ g_{A}$ again to $B \cap A \cap B$ adds another layer of corner tiles, corresponding to the cross tiles of $7 \times 7$ tiles.
This is due to the doubling property of the $g_{B} \circ g_{A}$ operation sending a $3 \times 3$ square onto a $7 \times 7$ square.
Similarly, $B( \cap A \cap B)^{3}$ will provide corner tiles corresponding to the cross tiles for the $15 \times 15$ squares, and in general $B( \cap A \cap B)^{n}$ will provide the cross tiles for all  $2^{n} \times 2^{n}$ squares (as well as cross tiles from smaller squares).

Since $B(\cap A \cap B)^{\infty}$ is well-defined, in the limit you get a tiling with corner tiles corresponding to every cross tile in a Robinson tiling. Since the cross tiles encode the positions of all $2^{m} \times 2^{m}$ squares (for $m \in \mathbb{Z}$) in the Robinson tiling, this is enough to encode the specific Robinson tiling. Thus $B(\cap A \cap B)^{\infty}$ is MLD to a Robinson tiling.

To be precise, it is MLD to a Robinson tiling with four faultlines meeting at the origin.

\begin{remark}
Other aperiodic tilings similar to the above Robinson tiling can be generated by changing the origin point of $g_{B} \circ g_{A}$. For further details, see \cite{DFLETCH}.
\end{remark}

\begin{remark}
The two dimensional variation of the $\cap$ operation bears similarities to certain areas of research on square tilings. In particular this concept may have uses with the Winfree model \cite{WINFREE} (self-assemblying constructions based on square tiles), since we have seen that the method can construct at least one two-dimensional structure.

Another possible use for the $\cap$ operation is in the field of computer graphics. Methods to create computer graphics from Wang tilings are well studied and form one of the most common methods of texture generation (for example \cite{MICRO}).
Briefly, a simple texture is placed onto each Wang prototile, and a tiling is formed from the Wang prototiles. The eye cannot see any periodicity in the tiling, thus a convincing graphic can be produced from simple textures.
This method breaks down when the graphic is approached, since the original simple textures can be seen more clearly, and pixelation may occur. Methods to stop this problem are currently being investigated (for example \cite{SIGGRAPH}).

The two dimensional $\cap$ function lends itself to this problem, since if you remove the rescaling factor of the $\cap$ function, the size of the tiles shrinks with every iteration. Thus by applying the $\cap$ function multiple times as the graphic is approached, new tiles can be introduced, and thus pixelation should be avoided.

On a more mathematical note, the last two chapters have described operations from limits of $\cap$ operations to Toeplitz sequences. Investigating whether a reverse operation can be constructed from a subset of Toeplitz sequences to compositions of $\cap$ operations would be an interesting avenue of inquiry.
\end{remark}

\end{document}